\def\1{\bm{1}}
\DeclareMathAlphabet{\mathsfit}{\encodingdefault}{\sfdefault}{m}{sl}
\SetMathAlphabet{\mathsfit}{bold}{\encodingdefault}{\sfdefault}{bx}{n}
\renewcommand*{\backrefalt}[4]{%
    \ifcase #1 \footnotesize{(Not cited.)}%
    \or        \footnotesize{(Cited on page~#2.)}%
    \else      \footnotesize{(Cited on pages~#2.)}%
    \fi}
\newcommand{\iid}{\overset{\textnormal{iid}}{\sim}}
\newcommand{\wc}{\overset{w}{\longrightarrow}}
\newcommand{\kl}{\textnormal{KL}}
\newtheorem{remark}{Remark}
\newtheorem{assumption}{Assumption}
\newtheorem{lemma}{Lemma}
\newtheorem{theorem}{Theorem}
\newtheorem{proposition}{Proposition}
\newtheorem{definition}{Definition}
\newtheorem{corollary}{Corollary}
\newcommand{\Wref}[1]{W\ref{#1}}
\newcommand{\kls}{\textnormal{KLS}}
\def\RR{\mathbb{R}}
\def\NN{\mathbb{N}}
\def\EE{\mathbb{E}}
\def\QQ{\mathbb{Q}}
\begin{document}
\onehalfspacing

\begin{center}

{\bf{\LARGE{Posterior Consistency in Parametric Models via a Tighter Notion of Identifiability}}}
  
\vspace*{.2in}
{\large{
\begin{tabular}{ccc}
Nicola Bariletto & Bernardo Flores & Stephen G. Walker
\end{tabular}
}}

\vspace*{.2in}

\begin{tabular}{c}
The University of Texas at Austin
\end{tabular}

\today

\vspace*{.2in}

\begin{abstract}
We investigate Bayesian posterior consistency in parametric density models with proper priors, challenging the common perception that the topic is settled. Classical results from the 1970s established posterior consistency as a consequence of MLE convergence, by combining regularity conditions with the assumption of model identifiability. In particular, the latter was treated as a background assumption and never examined in depth. This approach has gone largely unquestioned, partly due to a subsequent and nearly exclusive shift in focus to sieve-based methods tailored to nonparametric consistency. In our analysis, we place identifiability at the heart of posterior consistency. We show that, once one enlarges the model family to include weak limits, inconsistency fundamentally stems from a failure of identifiability at the true distribution. This finding reveals an important distinction: while such a failure can occur naturally in nonparametric models, it is highly implausible and essentially self-inflicted in parametric ones. This motivates a separate treatment of the two cases, with our focus here on the parametric side. Our theory leads to the finding that classical regularity assumptions are overly restrictive, while a simple tightening of identifiability suffices to establish posterior consistency even in irregular models where the MLE is inconsistent. Moreover, we prove that inconsistency requires the presence of densities with pathological oscillations that precisely match the true distribution. As we exemplify with an illustrative model, such behavior may arise only if the modeler possesses exact prior knowledge of the ground truth and adversarially encodes it in the model. Our example also underscores the need for distinct tools to study frequentist and Bayesian consistency: while MLE inconsistency stems from overfitting caused by likelihood peaks at the data—appropriately addressed through regularity or sieve methods—Bayesian parametric inconsistency is more naturally resolved by examining the identifiability structure of the model.
\end{abstract}

\end{center}

\section{Introduction}\label{sec:introduction}

Asymptotic consistency is a fundamental criterion for evaluating the quality of statistical estimation procedures. In Bayesian inference, the study of consistency of posterior distributions has been a highly active area of research, with the first contributions dating back to Joseph Doob's work \citep{doob1949application}. In his seminal result, Doob proved that, under a mild estimability condition, the posterior distribution asymptotically concentrates in arbitrary neighborhoods of the data-generating parameter value (under the assumption that a true one exists), for any parameter value belonging to a set of prior mass one. While remarkably parsimonious in its assumptions, Doob's approach was later reevaluated due to a major drawback: the theorem does not explicitly characterize the set of parameters at which the posterior is consistent, making it impossible to determine whether a given parameter belongs to this set, or how large its complement (where inconsistency may occur) is.

Subsequent major contributions appeared in the late 1960s and through the 1970s, focusing on parametric models for density estimation based on independently and identically distributed (iid) observations. A prominent example is the work of Andrew Walker \citep{walker1969} \citep[see also][]{berk1970consistency}, which leveraged the extensive classical theory on the consistency of frequentist procedures, particularly maximum likelihood estimators (MLEs), to establish posterior consistency. These approaches involved imposing identifiability and regularity conditions on the parametric family of likelihoods—such as smoothness of the log-likelihood ratio near zero and appropriate decay outside compact sets—to ensure both MLE consistency and concentration of the posterior around the MLE, thereby guaranteeing the convergence of posterior mass towards the true parameter value.

With Thomas Ferguson’s breakthrough definition of the Dirichlet process prior \citep{ferguson1973bayesian}, which made nonparametric inference practically feasible within the Bayesian framework, the literature on Bayesian consistency quickly shifted focus to infinite-dimensional models—partly due to the intriguing mathematical challenges they posed. Key early contributions include \cite{diaconis1986consistency, diaconis1986inconsistent}, followed by major developments in nonparametric density estimation \citep{ghosal1999, barron1999, walker2004squarerootsum}. The core idea in these works is to establish consistency using sieve methods which ensure that the posterior distribution concentrates within suitably defined  neighborhoods of the true data-generating density. These approaches also paved the way for important theoretical advances—such as the study of contraction rates \citep{ghosal2000convergencerates, ghosal2007convergence, lijoi2007convergence} and misspecified models \citep{kleijn2006misspecification, deblasi2013bayesian}—as well as for applications to specific nonparametric models \citep{lijoi2005consistency, ghosal2001convergence, ghosal2007posteriorDirichlet, ghosal2006posteriorGP}.

This trajectory of the literature has effectively shifted focus away from the parametric setting, with the study of nonparametric consistency nearly halting progress on the parametric side. As a result, conditions dating back more than 50 years, such as those in \cite{walker1969}, remain the standard tools for establishing consistency in finite-dimensional models.\footnote{See, e.g., the recent textbook treatment in \cite{ghosal2017fundamentals}, sect.~6.4. See instead \cite{mao2024calibrating, rustand2023bayesian, miller2021asymptotic, dougan2021bayesian} for very recent applied and methodological work citing \cite{walker1969}.} Indeed, to the best of our knowledge, although posterior consistency has been extensively studied for certain specific parametric models in recent years,\footnote{For instance, in the case of finite mixtures \citep{rousseau2011asymptotic, guha2021posterior}} there has been little to no major development in the realm of general parametric consistency since the original contributions from the early 1970s. In this article, we argue that this has resulted in a narrow view of parametric consistency and left key aspects of the topic underexplored. Moreover, these trends have limited the available conditions for establishing consistency to those developed decades ago—a time when, due to the lack of tools for a genuinely Bayesian analysis, the best available approach was to tie posterior consistency to the regular behavior of the MLE, thereby evaluating the asymptotic performance of Bayesian procedures through their alignment with frequentist ones.

\subsection{A new understanding of parametric consistency}\label{sub:new_perspective}

The key aim of this article is to conduct a fundamental reexamination of posterior parametric consistency. To set the stage for our discussion, consider the following standard modeling framework: let the sample space be $(\RR, \mathscr{B}(\RR))$,\footnote{Throughout the article, we only consider $\RR$ as our sample space to facilitate exposition, though much of our treatment easily extends to higher-dimensional scenarios.} where $\mathscr{B}(T)$ denotes the Borel $\sigma$-algebra on a topological space $T$, and let $\mathrm{d}x$ denote the Lebesgue measure on the real line. Define the statistical model $\mathcal{F}_\Theta := \{f_\theta : \theta \in \Theta\}$, where $f_\theta := \mathrm{d}F_\theta / \mathrm{d}x$ is the density (Radon--Nikodym derivative) associated with a probability measure\footnote{With a slight abuse of notation, we identify any probability measure $F$ on $(\RR, \mathscr B(\RR))$ with its cumulative disribution function (CDF), writing $F(x)\equiv F((-\infty, x])$ for all $x\in\RR$.} $F_\theta(\mathrm{d}x) \ll \mathrm{d}x$ parametrized by $\theta \in \Theta \subseteq \RR^p$, for some $p \in \NN$, and where $\Theta$ is assumed to be closed. As is natural for any sensibly parametrized family $\mathcal{F}_\Theta$ intended for density estimation, we assume that convergence of parameters in $\Theta$ is equivalent to convergence of the associated densities with respect to some strong metric, such as the Hellinger distance $d_h$; that is, for all $\theta, \theta_1, \theta_2, \ldots \in \Theta$,
\begin{equation*}
    \lim_{k \to \infty} \Vert \theta_k - \theta \Vert = 0 \iff \lim_{k \to \infty} d_h(f_{\theta_k}, f_\theta) = 0;
\end{equation*}
see Section \ref{sec:notation} for a formal definition of $d_h$. This allows us to identify $(\Theta, \Vert\cdot\Vert)$ with $(\mathcal F_\Theta, d_h)$ in terms of their metric structure, and to think of sequences of parameters $\theta_1, \theta_2,\ldots\in\Theta$ as sequences of densities in the model class $\mathcal F_\Theta$.

Now assume we observe a sample $X_{1:n} := (X_1, \dots, X_n) \iid F_{\theta_\star}$, where $\theta_\star \in \Theta$ is a fixed but unknown parameter to be estimated from the data. Following standard Bayesian procedures, a prior distribution $\Pi(\mathrm{d}\theta)$ on $(\Theta, \mathscr{B}(\Theta))$ gives rise, via Bayes' rule, to the posterior distribution
\begin{equation*}
    \Pi(\mathrm{d}\theta \mid X_{1:n}) = \frac{\prod_{i=1}^n f_\theta(X_i) \, \Pi(\mathrm{d}\theta)}{\int_\Theta \prod_{i=1}^n f_{\theta'}(X_i) \, \Pi(\mathrm{d}\theta')}.
\end{equation*}
Posterior consistency at $\theta_\star$ is then formulated as the requirement that
\begin{equation}\label{eq:posterior_consistency}
    \lim_{n \to \infty} \Pi(A_\varepsilon^c \mid X_{1:n}) = 0 \quad \text{a.s.-}F_{\theta_\star}^\infty
\end{equation}
for all $\varepsilon > 0$, where $A_\varepsilon := \{\theta \in \Theta : \Vert \theta - \theta_\star \Vert < \varepsilon\}$. Given the equivalence between the Euclidean and Hellinger metrics on \(\mathcal{F}_\Theta\), consistency implies that the posterior increasingly concentrates on densities that are arbitrarily close to the true density with respect to the Hellinger distance. Recalling that $d_h$ and the \(L^1\) metric are themselves equivalent on $\mathcal F_\Theta$, this provides a meaningful notion of convergence for densities—namely, pointwise closeness integrated with respect to the Lebesgue measure.

With this modeling framework in mind, our starting point is the seminal result by Lorraine Schwartz \citep[see also Theorem~\ref{thm:schwartz} below]{schwartz1964consistency}, who proved that if $\theta_\star$ lies in the Kullback-Leibler (KL) support of $\Pi$ (see Definition~\ref{def:KL_support} below), then the posterior distribution is \emph{weakly consistent} at $F_{\theta_\star}$, that is, \eqref{eq:posterior_consistency} holds when $A_\varepsilon$ is replaced with a neighborhood of $F_{\theta_\star}$ in the weak topology on $\mathcal F_\Theta$, metrized, for instance, by the L\'evy-Prokhorov metric (see Section~\ref{sec:notation} for a formal definition). In other words, Schwartz's theorem guarantees that, under a natural prior support condition, the posterior distribution concentrates in any small neighborhood of the true CDF. This result is remarkable in that it requires only a well-specified prior and applies even in nonparametric models—i.e., when $\Theta$ is infinite-dimensional rather than Euclidean. However, this type of consistency is not sufficient to characterize the posterior's behavior with respect to neighborhoods of the true density, rather than of the true CDF. The issue is that the posterior may concentrate on regions of the parameter space that yield arbitrarily good approximations of the true distribution in the weak sense, while still being far from the true density itself. A more detailed exploration of this phenomenon is provided in Section~\ref{sec:seq_identifiability}, but intuition can already be gained from Figure~\ref{fig:kde_oscillations}, which shows how a sequence of densities with increasing oscillations can converge in distribution to a target CDF while failing to converge in any meaningful to a proper density function.

\begin{figure}
    \centering
    \includegraphics[width=0.75\linewidth]{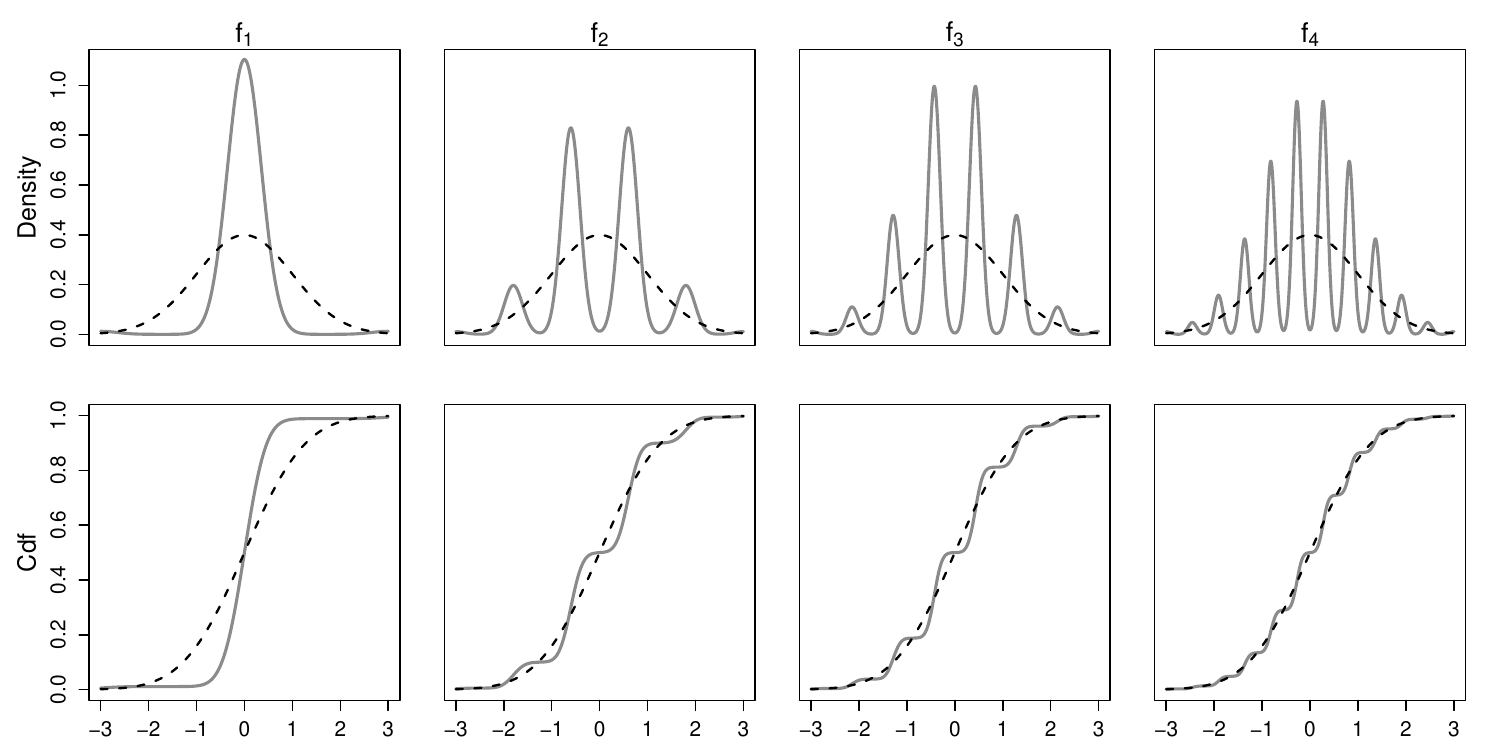}
    \caption{The initial segment of a sequence $f_1, f_2, \dots$ of indefinitely oscillating densities (solid gray, top row), whose corresponding sequence of CDFs (solid gray, bottom row) converges to a proper limiting distribution (dashed black). Such convergence is possible because the density sequence exhibits increasingly frequent oscillations around the density associated with the limiting CDF.}
    \label{fig:kde_oscillations}
\end{figure}

Interestingly, the strength of Schwartz’s result and of the proof techniques it introduced, based on exponential hypothesis tests, had a significant influence on subsequent works in nonparametric consistency. In particular, Schwartz's approach was later recognized \citep{ghosal2017fundamentals} as central to the sieve-based strategies of \cite{barron1999, ghosal1999}, which rely on controlling the complexity (e.g., the $L^1$ metric entropy) of certain sets of densities on which most of the prior mass is placed. In contrast, Schwartz’s result has received little attention in the parametric literature, likely due to the perception that its full potential had already been realized in the nonparametric setting. As a consequence, consistency results for parametric models have either relied on the classical conditions from the 1970s, such as those of \cite{walker1969}, or have been treated as simple corollaries of more abstract nonparametric analyses involving tools like sieves and entropy bounds.

However, we contend that these historical developments were shaped by a flawed assumption: that Bayesian parametric and nonparametric models for density estimation can be meaningfully analyzed through the same lens. Our disagreement with this assumption stems from the following observation. Schwartz’s result implies that the posterior asymptotically concentrates on parameter values that yield the true distribution function. Therefore, inconsistency in terms of densities (or parameter values) can occur only if there are regions of the parameter space, away from the true parameter $\theta_\star$, that produce CDFs arbitrarily close to $F_{\theta_\star}$ (and the posterior happens to concentrate around those instead of $\theta_\star$ itself). In other words, once the parameter space is enlarged to include all weak limits of sequences of distributions in the model, inconsistency can arise only if the true CDF corresponds to at least two distinct regions of the parameter space: a ``correct'' one (around $\theta_\star$) and at least one additional ``incorrect'' one (away from $\theta_\star$) that is obtained as the weak limit of some sequence of CDFs in the model. This situation, then, is best understood as a problem of identifiability at $\theta_\star$ in the enlarged parameter space, and if one is able to rule out the lack of identifiability described above, then consistency in terms of CDFs, guaranteed by Schwartz’s theorem, directly implies consistency in terms of parameters.

Once the key issue in achieving posterior consistency is recognized to be this extended notion of identifiability, which we later term \emph{sequential identifiability} (see Definition~\ref{def:sequential_identifiability} below), a fundamental discrepancy between parametric and nonparametric models becomes apparent. Nonparametric models, by their very nature, are inherently prone to a lack of sequential identifiability over large portions of the parameter space. Consider, for example, infinite Gaussian mixtures, which can weakly approximate any continuous distribution function while failing to converge in density due to excessively large values of the precision parameter. In such cases, the assumption of sequential identifiability no longer holds unless the prior support is appropriately constrained, and a nuanced analysis is required to account for the oscillatory behavior that gives rise to unidentifiability (see Sections~\ref{sec:seq_identifiability} and \ref{sec:discussion} for further elaboration).

On the other hand, the finite-dimensional nature of parametric models calls for a fundamentally different approach to sequential identifiability. For any parametric family that is identifiable in the usual sense, the inherently limited approximation power of the finite-dimensional parameter space—relative to the unbounded flexibility we will show to be required to induce sequential unidentifiability—makes it effectively impossible to weakly approximate more than a handful of distribution away from the corresponding parameter values. As we demonstrate in Section~\ref{sec:counterexample}, even a highly oscillatory family of densities, like the one we design for illustration, may fail sequential identifiability at most at a few isolated parameter values (just a single one, in our example). More generally, Theorem~\ref{thm:oscillations} shows that for sequential unidentifiability to occur at the true parameter, the model must include densities with arbitrarily frequent, finely tuned oscillations that integrate exactly to the true distribution function. Such behavior is highly implausible in any practical modeling context: because a parametric model allows, at most, for a small number of sequentially unidentifiable parameter values, posterior inconsistency would require exact prior knowledge of the true density in order to engineer the oscillations to align precisely with one of these few values, making inconsistency an unrealistic concern when dealing with parametric families genuinely designed for statistical modeling.\footnote{We note that \cite{walker2005data} identified the phenomenon of data tracking, arising from oscillatory densities, as a potential source of inconsistency. Our treatment, however, comprehensively reframes the issue in terms of identifiability and precisely characterizes the oscillatory patterns required for inconsistency to arise, showing that such behavior is not merely one possible cause, but in fact the sole mechanism (implausible in parametric models) through which inconsistency can occur.}

To summarize, the key distinction between parametric and nonparametric models lies in how they satisfy Schwartz’s theorem. In nonparametric models, the posterior can concentrate on densities that flexibly oscillate around (nearly) any candidate data-generating process, allowing them to weakly approximate the true distribution even when far from the true density—in our terminology, sequential unidentifiability holds over the whole (or most of) the model space. In contrast, parametric models generally lack this flexibility, so for Schwartz’s result to apply, the posterior must concentrate at the true parameter value.

Our analysis also highlights how, in a certain sense, the classical literature on parametric posterior consistency followed the MLE consistency literature in the ``wrong'' direction. Indeed, the theory of MLE consistency begins with the standard assumption of identifiability and adds suitable regularity conditions to ensure convergence. Parametric posterior consistency was then made to follow MLE consistency down the path of regularity,\footnote{One may argue that this was also true in the nonparametric setting, where the sieve conditions used to prove posterior consistency closely mirror those used for sieve MLE convergence \citep[e.g., see][]{wong1995probability, shen1994convergence, vandegeer2000empirical}.} at a time when the literature had not yet recognized that, in light of Schwartz's theorem, identifiability—not regularity—is the truly fundamental requirement. A central contribution of our work is to bring identifiability back to the core of the analysis of posterior consistency, yielding substantially better conditions in the parametric setting. Moreover, as the example in Section~\ref{sec:counterexample} illustrates, our strengthened notion of identifiability is specifically tailored to Bayesian procedures and has no bearing on MLE consistency, which may still fail due to the likelihood’s tendency to form peaks at the data and despite sequential identifiability at the data-generating parameter.

In summary, treating the parametric case separately from the nonparametric one, while still adopting Schwartz’s weak consistency result as a common foundation, naturally brings sequential identifiability to the forefront as the most natural and minimal condition for consistency in parametric models. As we show throughout the paper, this revised perspective allows us to go beyond the classical regularity assumptions from earlier works, enabling consistency even in irregular models where the maximum likelihood estimator performs poorly, such as our cosine-based example. This possibility reflects the different mechanisms behind inconsistency of frequentist and Bayesian procedures. For maximum likelihood, inconsistency often results from the likelihood overfitting the data, away from the true parameter value, by placing peaks at the observed points. Regularity or sieve conditions are appropriately designed to prevent this behavior, as seen in the classical assumptions discussed in Section~\ref{sec:classical_conditions}, particularly assumption~\Wref{walker_ass3}, as well as in our example in Section~\ref{sec:counterexample}, where these conditions fail because the model's oscillatory behavior allows each data point to be placed at a likelihood peak as the parameter diverges. By contrast, Schwartz's theorem implies that Bayesian posteriors are naturally able to recover the true distribution function, making sequential identifiability alone sufficient to ensure posterior consistency even in cases where the MLE is inconsistent.

\paragraph{Layout of the paper.} The rest of the article is structured as follows. After establishing basic notation and definitions in Section~\ref{sec:notation}, Section~\ref{sec:classical_conditions} reviews in detail the conditions proposed in the classical literature on posterior parametric consistency. In Section~\ref{sec:seq_identifiability}, we formally introduce our foundational approach based on sequential identifiability, providing general sufficient conditions for posterior consistency and linking inconsistency to a peculiar oscillatory behavior of the densities in the model. Section~\ref{sec:counterexample} illustrates our framework through a one-dimensional parametric model which, despite violating classical regularity assumptions, is readily handled using our methodology. This example also serves to highlight key aspects of our theoretical analysis, particularly in relation to oscillations. Section~\ref{sec:discussion} concludes the paper, while the proofs of all the theoretical results can be found in the supplementary material at the end of the article.

\section{Notation and basic definitions}\label{sec:notation}

Throughout, we use capital letters such as \(F\), \(G\), etc.\ to denote probability distributions, and lowercase letters \(f\), \(g\), etc.\ to denote the corresponding densities with respect to the Lebesgue measure. The standard Euclidean norm is denoted by \(\Vert \cdot \Vert\). We write \(\lesssim\) to indicate inequality up to a constant that, unless otherwise specified, is understood to be universal.

The Hellinger distance and the KL
divergence between two densities \(f\) and \(g\) are defined respectively as
\begin{align*}
    d_h(f, g) & := \left[\int_\RR \left(\sqrt{f(x)} - \sqrt{g(x)}\right)^2 \mathrm{d}x\right]^{1/2}, \\
    \kl(f,g) & := \int_\RR \ln\left(\frac{f(x)}{g(x)}\right) f(x)\,\mathrm dx,
\end{align*}
while the L\'evy–Prokhorov metric between two probability measures \(F\) and \(G\) is given by
\begin{equation*}
    d_w(F, G) := \inf\left\{\delta > 0:\, \forall A \in \mathscr B(\RR),\, F(A) \leq G(A^\delta) + \delta, \, G(A) \leq F(A^\delta) + \delta\right\},
\end{equation*}
where \(A^\delta := \{x \in \RR : \exists y \in A,\, \vert x - y \vert < \delta\}\) denotes the \(\delta\)-enlargement of the set \(A\). This distance metrizes weak convergence, which is denoted as $\wc$.

The next two definitions are fundamental. In particular, the KL support condition is central to Schwartz’s theorem (see Theorem~\ref{thm:schwartz} below) and forms the starting point of our analysis. As for identifiability \citep{casella2024statistical}, we assume without further mention that all considered statistical models satisfy this basic property at all parameter values.

\begin{definition}\label{def:KL_support}
    A parameter \(\theta_\star \in \Theta\) is said to be in the KL support of the prior \(\Pi\), denoted \(\theta_\star \in \textnormal{KLS}(\Pi)\), if
    \[
    \Pi\left(\left\{\theta \in \Theta : \kl(f_{\theta_\star}, f_\theta) < \varepsilon\right\}\right) > 0 \quad \text{for all } \varepsilon > 0.
    \]
\end{definition}

\begin{definition}\label{def:identifiability}
    The parametric family \(\mathcal F_\Theta := \{f_\theta : \theta \in \Theta\}\) is identifiable at \(\theta_\star \in \Theta\) if \(F_\theta \neq F_{\theta_\star}\) for all \(\theta \neq \theta_\star\).
\end{definition}

Finally, as noted in the introduction, we assume that the Euclidean and Hellinger metrics are equivalent on any parametric model under consideration. Accordingly, we will use the notions of Hellinger and Euclidean consistency interchangeably throughout the paper, without further mention. In particular, Hellinger consistency is to be understood as in \eqref{eq:posterior_consistency}, with the Euclidean metric replaced by $d_h$ in the definition of the neighborhood $A_\varepsilon$.

\section{Classical conditions for parametric posterior consistency}\label{sec:classical_conditions}

As discussed in Section~\ref{sec:introduction}, the predominant approach for establishing posterior consistency in parametric models is to verify that regularity conditions of the type proposed by \cite{walker1969} are satisfied. In particular, restricting attention to the one-dimensional setting \(\Theta \subseteq \RR\), in addition to the basic requirements that \(\Theta\) be closed and \(\mathcal F_\Theta\) identifiable, \cite{walker1969} required the following:

\begin{enumerate}
    \renewcommand{\labelenumi}{W\arabic{enumi}.}
    \item \label{walker_ass1}
    The set $\mathcal O_\theta := \{x \in \RR : f_\theta(x) = 0\}$ is the same for all $\theta \in \Theta$.
    
    \item \label{walker_ass2}
    For all $\theta \in \Theta$ and $x \in \RR$, there exists a function $H_\delta(x, \theta)$ such that, for all $\delta > 0$ small enough and all $\theta' \in \RR$ with $|\theta - \theta'| < \delta$, the following conditions hold:
    \begin{align*}
        |\ln f_\theta(x) - \ln f_{\theta'}(x)| &< H_\delta(x, \theta), \\
        \lim_{\delta \to 0} H_\delta(x, \theta) &= 0, \\
        \lim_{\delta \to 0} \int_\RR H_\delta(x, \theta) f_{\theta_\star}(x)\, \mathrm dx &= 0, \quad \forall \theta_\star \in \Theta;
    \end{align*}
    
    \item \label{walker_ass3}
    If $\Theta$ is unbounded, then for all $\theta_\star \in \Theta$ and sufficiently large $M > 0$, there exists a function $K_M(x, \theta_\star)$ such that
    \begin{equation*}
        \ln f_\theta(x) - \ln f_{\theta_\star}(x) < K_M(x, \theta_\star)
    \end{equation*}
    for all $|\theta| > M$, with
    \begin{equation*}
        \lim_{M \to \infty} \int_\RR K_M(x, \theta_\star) f_{\theta_\star}(x)\, \mathrm dx < 0.
    \end{equation*}
\end{enumerate}

Intuitively, assumption~\Wref{walker_ass1} ensures that the support of the model remains fixed across parameter values, thereby preventing singularities in likelihood ratios. Assumption~\Wref{walker_ass2} imposes a form of local uniform continuity on the log-likelihood function with respect to the parameter, captured via a modulus of continuity that vanishes both pointwise and in expectation under any true model. Finally, assumption~\Wref{walker_ass3} provides control over the relative tail behavior of the log-likelihood, requiring that log-likelihood ratios decay sufficiently fast---in an integrable manner---as the parameter value diverges, avoiding arbitrary peaks of the likelihood that may cause the MLE to overfit the data.

\cite{walker1969} showed that assumptions \Wref{walker_ass1}--\Wref{walker_ass3}, together with a positive and continuous prior density, imply posterior consistency at any $\theta_\star \in \Theta$ as a consequence of MLE consistency. While technically valid—and while the positivity of the prior density may, under mild regularity conditions, be viewed as equivalent to the KL support condition of \cite{schwartz1964consistency}—these assumptions are quite specific and restrictive (see, e.g., Section~\ref{sec:counterexample} for a demonstration of this point). More importantly, their formulation is not informed by any clear connection to the core mechanisms that govern posterior consistency, but rather aim to obtain the latter as a consequence of well-behaved frequentist procedures. Before presenting our own alternative treatment, we briefly review a more recent approach to consistency which, although potentially more broadly applicable, shares many of the same underlying limitations when applied to parametric models.

\subsection{An alternative MLE-based strategy}

Alternatively, \cite{walker2001} approached the problem of posterior consistency as follows. Taking $A_\varepsilon$ to be a Hellinger $\varepsilon$-ball around $f_{\theta_\star}$, consider
\begin{equation*}
    \Pi(A_\varepsilon^c \mid X_{1:n}) = \frac{\int_{A_\varepsilon^c}\prod_{i=1}^n\frac{f_\theta(X_i)}{f_{\theta_\star}(X_i)} \,\Pi(\mathrm d\theta)}{\int_{\Theta}\prod_{i=1}^n\frac{f_\theta(X_i)}{f_{\theta_\star}(X_i)} \,\Pi(\mathrm d\theta)}
\end{equation*}
for sufficiently small $\varepsilon > 0$. Standard results \citep[see, e.g.,][Lemma 4]{barron1999} provide suitable lower bounds for the denominator as long as $\theta_\star\in\kls(\Pi)$. For the numerator, letting $\hat\theta_n$ denote an MLE, one can write
\begin{equation*}
    \int_{A_\varepsilon^c}\prod_{i=1}^n\frac{f_\theta(X_i)}{f_{\theta_\star}(X_i)} \,\Pi(\mathrm d\theta) \leq \prod_{i=1}^n\left(\frac{f_{\hat\theta_n}(X_i)}{f_{\theta_\star}(X_i)}\right)^{1/2} \int_{A_\varepsilon^c}\prod_{i=1}^n\left(\frac{f_\theta(X_i)}{f_{\theta_\star}(X_i)}\right)^{1/2} \Pi(\mathrm d\theta).
\end{equation*}
While the second factor is readily shown to decay exponentially in $n$, the key idea in \cite{walker2001} is to require that
\begin{equation}\label{eq:walker_hjort_MLE_condition}
    \prod_{i=1}^n\left(\frac{f_{\hat\theta_n}(X_i)}{f_{\theta_\star}(X_i)}\right)^{1/2} < e^{cn/2} \quad \iff\quad \frac{1}{n}\sum_{i=1}^n \ln\left(\frac{f_{\hat\theta_n}(X_i)}{f_{\theta_\star}(X_i)}\right) < c
\end{equation}
eventually a.s.-$F_{\theta_\star}^\infty$ for all $c > 0$, which arises if
\begin{equation*}
\lim_{n\to\infty} \frac{1}{n}\sum_{i=1}^n  \ln\left(\frac{f_{\hat\theta_n}(X_i)}{f_{\theta_\star}(X_i)}\right) =  0
\end{equation*}
a.s.-$F_{\theta_\star}^\infty$. Under this condition, one obtains $\lim_{n \to \infty} \Pi(A_\varepsilon^c \mid X_{1:n}) = 0$ a.s.-$F_{\theta_\star}^\infty$ for all $\varepsilon > 0$, thus establishing posterior consistency.

The condition in \eqref{eq:walker_hjort_MLE_condition} is often satisfied by regular parametric models, for instance as guaranteed by the classical theorem of \cite{wilks1938large}. Nonetheless, this proof technique inherits the same core limitations as that of \cite{walker1969}: it relies on potentially stringent regularity conditions to guarantee MLE behavior \citep{kiefer1956consistency, vandegeer2000empirical, vandervaart2000asymptotic}, and achieves consistency through a sequence of technical bounds, rather than by appealing to a conceptual framework that directly explains why the posterior should concentrate. In the next section, we move beyond these approaches and develop a more principled and flexible route to consistency, rooted in the concept of sequential identifiability.

\section{Sequential identifiability, oscillations and posterior consistency}\label{sec:seq_identifiability}


As mentioned in Section~\ref{sec:introduction}, our starting point is the groundbreaking result of \cite{schwartz1964consistency}. Given its central role, we formally state it here (without proof).

\begin{theorem}[Schwartz]\label{thm:schwartz}
    Let $\theta_\star\in\kls(\Pi)$. Then the posterior is weakly consistent at $\theta_\star$, that is,
    \begin{equation*}
        \lim_{n \to \infty} \Pi(\{\theta \in \Theta : d_w(F_\theta, F_{\theta_\star}) > \varepsilon\} \mid X_{1:n}) = 0
    \end{equation*}
    a.s.-$F_{\theta_\star}^\infty$ for all \(\varepsilon > 0\).
\end{theorem}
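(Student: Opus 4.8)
The plan is to follow Schwartz's own exponential-testing strategy: write the posterior mass of the complement of a weak neighborhood as a likelihood-ratio integral and control the numerator and denominator separately. Fix $\varepsilon > 0$ and set $U := \{\theta \in \Theta : d_w(F_\theta, F_{\theta_\star}) \le \varepsilon\}$, so that
\[
\Pi(U^c \mid X_{1:n}) = \frac{\int_{U^c} \prod_{i=1}^n \frac{f_\theta(X_i)}{f_{\theta_\star}(X_i)}\,\Pi(\mathrm d\theta)}{\int_{\Theta} \prod_{i=1}^n \frac{f_\theta(X_i)}{f_{\theta_\star}(X_i)}\,\Pi(\mathrm d\theta)} =: \frac{N_n}{D_n}.
\]
The goal is to show that $D_n$ decays at most subexponentially while $N_n$ decays exponentially, both almost surely under $F_{\theta_\star}^\infty$, so that the ratio vanishes.

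For the denominator I would invoke the hypothesis $\theta_\star \in \kls(\Pi)$ together with the standard evidence lower bound \citep[Lemma 4]{barron1999}. Restricting the integral to $B_\delta := \{\theta : \kl(f_{\theta_\star}, f_\theta) < \delta\}$, which has strictly positive prior mass for every $\delta > 0$ by the KL support condition, Jensen's inequality applied to the prior restricted to $B_\delta$ together with the strong law of large numbers applied to $\tfrac{1}{n}\sum_{i=1}^n \ln\frac{f_\theta(X_i)}{f_{\theta_\star}(X_i)}$ gives $D_n \ge e^{-2n\delta}$ eventually a.s.; since $\delta$ is arbitrary, $D_n \ge e^{-n\beta}$ eventually a.s.\ for every $\beta > 0$. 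This step is routine given the hypothesis.

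The heart of the argument is the numerator, and this is where the \emph{weak} topology does the crucial work. Because $d_w$ metrizes weak convergence, the neighborhood $U$ contains a basic weak neighborhood cut out by finitely many bounded continuous functions $g_1,\dots,g_k$ and some $\eta > 0$, whence
\[
U^c \subseteq \bigcup_{j=1}^k \left\{\theta : \left| \int g_j\,\mathrm dF_\theta - \int g_j\,\mathrm dF_{\theta_\star}\right| \ge \eta \right\}.
\]
I would then build the tests
\[
\phi_n^{(j)} := \1\left\{\left| \tfrac{1}{n}\sum_{i=1}^n g_j(X_i) - \int g_j\,\mathrm dF_{\theta_\star}\right| \ge \tfrac{\eta}{2}\right\}, \qquad \phi_n := \max_{1\le j\le k}\phi_n^{(j)}.
\]
Since each $g_j$ is bounded, Hoeffding's inequality yields $\E_{F_{\theta_\star}}[\phi_n] \le 2k\,e^{-cn}$ and, crucially, $\sup_{\theta\in U^c}\E_{F_\theta}[1-\phi_n] \le 2\,e^{-cn}$ for some $c>0$; the latter is uniform over $U^c$ because membership in $U^c$ forces the true mean of some $g_j$ to lie at distance at least $\eta$ from the threshold level $\eta/2$. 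To finish, I would use $\Pi(U^c \mid X_{1:n}) \le \phi_n + (1-\phi_n)N_n/D_n$: the first term vanishes eventually a.s.\ by Borel--Cantelli (the indicator is integer-valued with summable mean), while Fubini and a change of measure give
\[
\E_{F_{\theta_\star}}\!\left[(1-\phi_n)N_n\right] = \int_{U^c} \E_{F_\theta}[1-\phi_n]\,\Pi(\mathrm d\theta) \le 2\,e^{-cn},
\]
so Markov plus Borel--Cantelli force $(1-\phi_n)N_n \le e^{-c'n}$ eventually a.s.\ for any $c' < c$; combined with $D_n \ge e^{-n\beta}$ for $\beta < c'$, the ratio decays exponentially.

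I expect the main obstacle to be the uniform exponential type-II bound over the possibly non-compact set $U^c$: one must separate the entire complement from $F_{\theta_\star}$, not just points of it. The resolution is precisely the structural feature emphasized in the introduction — a weak neighborhood is carved out by finitely many bounded continuous functionals, so a finite family of empirical-mean tests suffices and no covering/entropy control of $U^c$ is needed. This is exactly why Schwartz's theorem delivers weak consistency essentially for free under only the KL support condition, and it is the point at which the weak setting is genuinely easier than the Hellinger one, where controlling the complexity of strong neighborhoods becomes unavoidable.
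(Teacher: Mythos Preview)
The paper explicitly states Theorem~\ref{thm:schwartz} \emph{without proof}, citing \cite{schwartz1964consistency} for the original argument; there is therefore no in-paper proof against which to compare your proposal. Your approach is, however, precisely Schwartz's classical strategy as it is standardly presented (e.g., \cite{ghosal2017fundamentals}): split the posterior as a likelihood-ratio quotient, lower bound the denominator via the KL support condition \citep[Lemma~4]{barron1999}, and kill the numerator with uniformly exponentially consistent tests built from empirical means of the finitely many bounded continuous functions that cut out a basic weak neighborhood contained in $U$. The key structural observation you isolate --- that the weak topology delivers a \emph{finite} family of bounded-functional tests and hence uniform Hoeffding-type type-II control over all of $U^c$ without any entropy or compactness argument --- is exactly the point, and the decomposition $\Pi(U^c\mid X_{1:n}) \le \phi_n + (1-\phi_n)N_n/D_n$ together with Fubini, Markov, and Borel--Cantelli is the standard way to finish. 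The argument is correct.
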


While remarkable, Theorem~\ref{thm:schwartz} is not sufficient to address consistency in density estimation, as the topology induced by \(d_w\) is too weak to meaningfully capture closeness between densities. In particular, it is possible to construct sequences of densities that do not converge in the Hellinger sense, yet whose associated CDFs converge pointwise to that of a continuous random variable—i.e., they converge weakly. This phenomenon, which will be analyzed in greater detail in Theorem~\ref{thm:oscillations} and illustrated through a concrete model in Section~\ref{sec:counterexample}, arises from the fact that a sequence of densities may oscillate indefinitely in a manner that precludes convergence in Hellinger distance (hence in terms of parameter values), while still approximating the target CDF in distribution (see Figure~\ref{fig:kde_oscillations} in Subsection~\ref{sub:new_perspective}). In such cases, although the limit in density does not exist, the sequence may nonetheless approximate the CDF of the density around which it oscillates.

To address this potential gap between the two notions of convergence, a natural strategy is to rule out the possibility of such pathological approximations occurring within the parametric family of interest. The next definition, fundamental to our analysis, formalizes this idea.

\begin{definition}\label{def:sequential_identifiability}
    The parametric family $\mathcal F_\Theta$ is sequentially identifiable at $\theta_\star\in\Theta$ if, for any sequence $(\theta_j)_{j\in\NN}\subseteq \Theta$, $F_{\theta_j}\wc F_{\theta_\star}$ as $j\to\infty$ implies $\lim_{j\to\infty}\Vert\theta_j-\theta_\star\Vert = 0$.
\end{definition}

Intuitively, the concept of sequential identifiability implies that there exists no region of the Euclidean parameter space where $F_{\theta_\star}$ can be sequentially approximated in distribution, except in arbitrarily small neighborhoods of $\theta_\star$ itself. For example, if $\Theta = [0,\infty)$, sequential identifiability precludes the possibility that $F_\theta$ approximates $F_{\theta_\star}$ in distribution as $\theta \to \infty$. Moreover, observe that if $(\theta_j)_{j\in\NN}$ is a sequence such that $\lim_{j\to\infty} \theta_j = \theta \neq \theta'$, then our assumption that Euclidean convergence implies Hellinger convergence (and therefore weak convergence) ensures that, under sequential identifiability on all of $\Theta$, we must have $F_{\theta} \neq F_{\theta'}$. Thus, sequential identifiability may be interpreted as a strengthening of the standard notion of identifiability (Definition~\ref{def:identifiability}), requiring that no distribution in the model appears at multiple locations of the parameter space, once the latter is suitably extended to include weak limits.

It is worth noting that this seemingly “obvious” extension of the notion of identifiability has been largely overlooked for a simple reason: it offers no benefit for analyzing the convergence properties of the MLE. Because, as we have demonstrated, Bayesian consistency has traditionally been studied in close connection with MLE consistency, our notion in Definition~\ref{def:sequential_identifiability} would not have surfaced within that framework. However, as we show next, if sequential identifiability holds, it leads directly to Bayesian consistency. This observation highlights a key message of our work: Bayesian consistency and MLE consistency should be treated using fundamentally different tools, contrary to the prevailing approach in the literature. This point will be further illustrated in Section~\ref{sec:counterexample} with a concrete parametric model.

The next theorem is the central result of our analysis, establishing a key connection between sequential identifiability and posterior consistency.

\begin{theorem}\label{thm:general_parametric_consistency}
    If $\theta_\star\in\textnormal{KLS}(\Pi)$ and $\mathcal F_\Theta$ is sequentially identifiable at $\theta_\star$, the posterior is consistent at $\theta_\star$. In particular, the posterior predictive density
    $$\hat f_n:= \int_\Theta f_\theta\, \Pi(\mathrm d\theta\mid X_{1:n})$$
    is a consistent estimator of $f_{\theta_\star}$.
\end{theorem}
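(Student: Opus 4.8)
The plan is to use Schwartz's theorem (Theorem~\ref{thm:schwartz}) as the engine and to let sequential identifiability upgrade weak-neighborhood concentration to Euclidean (equivalently Hellinger) concentration. The central object I would establish first is a uniform \emph{neighborhood-conversion} lemma: for every $\varepsilon > 0$ there exists $\delta > 0$ such that
\[
\{\theta \in \Theta : \Vert \theta - \theta_\star \Vert \ge \varepsilon\} \subseteq \{\theta \in \Theta : d_w(F_\theta, F_{\theta_\star}) > \delta\}.
\]
I would prove this by contradiction. If it failed for some $\varepsilon$, then taking $\delta = 1/j$ would produce a sequence $(\theta_j)_{j \in \NN} \subseteq \Theta$ with $\Vert \theta_j - \theta_\star \Vert \ge \varepsilon$ yet $d_w(F_{\theta_j}, F_{\theta_\star}) \le 1/j$, so that $F_{\theta_j} \wc F_{\theta_\star}$ (recall $d_w$ metrizes weak convergence). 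Sequential identifiability at $\theta_\star$ would then force $\Vert \theta_j - \theta_\star \Vert \to 0$, contradicting $\Vert \theta_j - \theta_\star \Vert \ge \varepsilon$ for all $j$.

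Granting this lemma, posterior consistency is immediate. Fix $\varepsilon > 0$ and take the corresponding $\delta > 0$. Since $\theta_\star \in \kls(\Pi)$, Theorem~\ref{thm:schwartz} gives $\Pi(\{\theta : d_w(F_\theta, F_{\theta_\star}) > \delta\} \mid X_{1:n}) \to 0$ a.s.-$F_{\theta_\star}^\infty$. The inclusion above yields $A_\varepsilon^c \subseteq \{\theta : d_w(F_\theta, F_{\theta_\star}) > \delta\}$, so $\Pi(A_\varepsilon^c \mid X_{1:n})$ is dominated by a null sequence and hence vanishes a.s., which is precisely \eqref{eq:posterior_consistency}.

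For the posterior predictive density I would bound its $L^1$ distance to $f_{\theta_\star}$ by a posterior average. By Tonelli's theorem,
\[
\Vert \hat f_n - f_{\theta_\star} \Vert_{L^1} \le \int_\Theta \Vert f_\theta - f_{\theta_\star} \Vert_{L^1}\, \Pi(\mathrm d\theta \mid X_{1:n}).
\]
I would then split this integral over $A_\varepsilon$ and $A_\varepsilon^c$. On $A_\varepsilon^c$ the integrand is at most $2$ (both are densities), while the posterior mass there vanishes a.s. by the first part. On $A_\varepsilon$, the equivalence of the Euclidean, Hellinger, and $L^1$ metrics on $\mathcal F_\Theta$ lets me choose $\varepsilon$ so small that $\Vert f_\theta - f_{\theta_\star} \Vert_{L^1} < \eta$ throughout. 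This gives $\limsup_n \Vert \hat f_n - f_{\theta_\star} \Vert_{L^1} \le \eta$ a.s.; letting $\eta \to 0$ along a countable sequence (so the exceptional null sets union to a null set) yields $L^1$—equivalently Hellinger—consistency of $\hat f_n$.

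The only delicate point is the \emph{uniformity} in the conversion lemma, namely that a single $\delta$ works for the entire complement of $A_\varepsilon$ rather than merely for each fixed $\theta$; this is exactly what the sequential (as opposed to pointwise) formulation of identifiability delivers through the contradiction argument. Everything else is routine bookkeeping: measurability of $\{\theta : d_w(F_\theta, F_{\theta_\star}) > \delta\}$ follows from the weak-continuity of $\theta \mapsto F_\theta$, and the almost-sure conclusions only require intersecting countably many full-measure events.
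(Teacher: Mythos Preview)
Your proof is correct and follows essentially the same approach as the paper's: both extract from sequential identifiability, via a contradiction argument, the fact that the complement of any Euclidean $\varepsilon$-ball around $\theta_\star$ is contained in the complement of some weak $\delta$-ball, and then invoke Schwartz's theorem. You package this as a standalone ``neighborhood-conversion'' lemma before applying Schwartz once, whereas the paper interleaves the sequence construction with the posterior argument (fixing $\varepsilon_j\downarrow 0$, assuming inconsistency, and locating a common $x_{1:\infty}$ and $n$ at which both the weak-concentration and the Euclidean-nonconcentration events hold to produce each $\theta_j$); your route is a bit more direct but mathematically identical. For the predictive density, the paper uses Jensen's inequality on the convex map $f\mapsto d_h(f,f_{\theta_\star})$ to bound $d_h(\hat f_n,f_{\theta_\star})$ by the posterior average of $d_h(f_\theta,f_{\theta_\star})$ and then splits over $A_\varepsilon$ and $A_\varepsilon^c$, which is the Hellinger analogue of your $L^1$/Tonelli argument.
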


Theorem \ref{thm:general_parametric_consistency} is noteworthy in that it establishes posterior consistency under quite mild conditions: sequential identifiability of the model and a prior that assigns positive mass to any KL neighborhood of the true parameter, the latter being a standard well-specification assumption. As we illustrate in Section~\ref{sec:counterexample}, this condition captures the core mechanism behind Bayesian consistency and, unlike the classical assumptions in the literature, is entirely decoupled from MLE convergence. In fact, our illustrative model will reveal that even when sequential identifiability holds at a given parameter value, ensuring posterior consistency at it, the MLE may still be inconsistent due to likelihood peaks at the data. The intuition behind the derivation of Theorem~\ref{thm:general_parametric_consistency} is as follows: the KL support condition, via Schwartz’s theorem, ensures that the posterior concentrates in weak neighborhoods of $F_{\theta_\star}$. Sequential identifiability then rules out the possibility that such concentration occurs around points in the parameter space other than that corresponding to the true density, thereby yielding posterior consistency.

A practical implication of this result is that, to establish parameter consistency at some $\theta_\star$, it suffices to verify that the model space—augmented by its weak limit points—does not contain $F_{\theta_\star}$ at more than one location (e.g., both within the parameter space and along a sequence $(\theta_j)_{j \in \mathbb{N}}$ with $\lim_{j\to\mathbb N}\|\theta_j\| = \infty$). As we show in the next subsection, the possibility of such a scenario is tied to the presence of pathological oscillations of the model densities around $f_{\theta_\star}$, which is highly unlikely in any parametric modeling setting unless the modeler possesses specific knowledge of $f_{\theta_\star}$ and deliberately constructs the model to fail sequential identifiability at $\theta_\star$. Notice also that, in line with our earlier discussion in Subsection~\ref{sub:new_perspective}, a brief inspection of the proof of Theorem~\ref{thm:general_parametric_consistency} reveals that its validity is not restricted to the case of a finite-dimensional Euclidean parameter space $\Theta$, and in fact formally extends to nonparametric models as well. Nevertheless, as we have argued, sequential identifiability is generally not a tenable assumption in nonparametric settings, where, unlike in the parametric case, sequential \emph{un}identifiability is typically inherent.

Another insight from Theorem~\ref{thm:general_parametric_consistency} arises from the following observations. If the posterior fails to be consistent at some $\theta_\star \in \Theta$, Theorem~\ref{thm:general_parametric_consistency} implies the existence of a region in the augmented parameter space that induces a lack of sequential identifiability at $\theta_\star$, with the posterior accumulating with positive probability around such a weak limit point, separated from $\theta_\star$ in the Euclidean sense. Under the basic assumption of (traditional) identifiability, however, inconsistency cannot result in posterior concentration around a different point within the parameter space, as this, together with Schwartz's assurance of weak consistency, would contradict identifiability. Rather, the posterior mass must shift toward a region in the augmented space, lying outside the original parameter space, where the true distribution is also recovered. In other words, the posterior does not simply ``miss'' the true parameter by concentrating around a nearby but incorrect value; instead, it shifts toward regions in the augmented space proper, such as points at infinity, which correspond to weak limits of the true distribution. This observation suggests a practical heuristic for diagnosing strong consistency: in addition to directly verifying sequential identifiability, one may examine whether the posterior remains confined within reasonable regions of the parameter space, rather than drifting toward, for instance, infinity. In light of our discussion, such stability would provide evidence in support of consistency.

\subsection{The role of oscillations}

While sequential identifiability is the central concept of our theoretical analysis, it is crucial to understand the implications of its failure. The next theorem addresses this question by examining the behavior of a sequence of density functions that do not converge in the Hellinger metric (which, in our parametric setting, is equivalent to non-convergence of the associated parameter sequence), yet whose corresponding distribution functions do converge to a proper limit.

To state the next result, we first establish the following pieces of terminology. For any two densities $f$ and $g$ such that $A:=\{x\in\RR : g(x)>f(x)\}$ is open, we say that $g$ oscillates $O$ times around $f$ if $O\in\NN$ is the minimum number of disjoint intervals $(a_1, b_1), \dots, (a_O, b_O)$ such that we can write
\begin{equation*}
    A = \bigcup_{i=1}^O (a_{i}, b_{i}).
\end{equation*}
Notice that the openness of $A$ implies that there exists a decomposition of it into countably many disjoint open intervals, and we call the above expression the \textit{minimal decomposition} of $A$.

\begin{theorem}\label{thm:oscillations}
    Let $g, f_1, f_2,\dots$ be densities such that the sets
    \begin{equation*}
        A_j:=\{x\in\RR : f_j(x)>g(x)\}, \quad B_j:=\{x\in\RR : g(x)>f_j(x)\}
    \end{equation*}
    are open for all $j\in\NN$.\footnote{While openness of $A_j$ and $B_j$ is a minimal requirement needed in the proof of the result, notice that any set of densities $g, f_1, f_2, ...$ that are continuous on a common support satisfies the assumption.} Moreover, assume that (i) $d_w(F_j, G) \to 0$ as $j\to\infty$, and (ii) $d_h(f_j, g) \geq \varepsilon >0$ for all $j\in\NN$. Then the number of oscillations $O_j$ of $f_j$ around $g$ tends to infinity as $j\to\infty$.
\end{theorem}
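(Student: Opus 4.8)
The plan is to establish the conclusion quantitatively: rather than assuming $O_j$ stays bounded, I would derive an explicit lower bound $O_j \geq \varepsilon^2/(4\delta_j)$, where $\delta_j := \sup_{x\in\RR}|F_j(x) - G(x)|$. Since I will show $\delta_j \to 0$, the right-hand side diverges, forcing $O_j \to \infty$.

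First I would convert the Hellinger bound (ii) into an $L^1$ lower bound. From the pointwise inequality $(\sqrt{f_j}-\sqrt g)^2 \leq |\sqrt{f_j}-\sqrt g|\,(\sqrt{f_j}+\sqrt g) = |f_j - g|$, integrating gives $\Vert f_j - g\Vert_1 \geq d_h^2(f_j,g) \geq \varepsilon^2$. Because $f_j$ and $g$ both integrate to $1$, the positive and negative parts of $f_j - g$ carry equal mass; writing $I_j := \int_{A_j}(f_j - g)\,\mathrm dx$ for the mass of the positive part, this gives $I_j = \tfrac12\Vert f_j-g\Vert_1 \geq \varepsilon^2/2$.

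Next I would bound $I_j$ from above using weak convergence. Since $g$ is a Lebesgue density, $G$ is absolutely continuous, hence continuous, so (i) together with P\'olya's theorem upgrades weak convergence to uniform convergence of the CDFs, i.e. $\delta_j\to 0$. Writing the minimal decomposition $A_j = \bigcup_{i=1}^{O_j}(a_i,b_i)$ and using $\int_{a_i}^{b_i}(f_j-g)\,\mathrm dx = [F_j(b_i)-G(b_i)] - [F_j(a_i)-G(a_i)]$, each interval contributes at most $2\delta_j$ in absolute value (with the convention $F_j-G \to 0$ at $\pm\infty$ covering unbounded intervals). Summing the $O_j$ contributions yields $I_j \leq 2O_j\,\delta_j$. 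Combining with the lower bound gives $\varepsilon^2/2 \leq I_j \leq 2O_j\delta_j$, hence $O_j \geq \varepsilon^2/(4\delta_j)\to\infty$.

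The conceptual crux, and the step I expect to demand the most care, is the passage from density-level to CDF-level information: the integral of $f_j - g$ over each maximal interval of $A_j$ is exactly an increment of $F_j - G$, which weak convergence forces to be uniformly small. Uniform (not merely pointwise) CDF convergence is indispensable here, because the endpoints $a_i, b_i$ depend on $j$ and may drift arbitrarily along the line; this is precisely where the continuity of $G$ — guaranteed by $g$ being a density — and P\'olya's theorem are used. The only remaining technicalities are the treatment of unbounded intervals and the boundary behavior $f_j = g$ on $\partial A_j$, both handled routinely.
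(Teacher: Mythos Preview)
Your argument is correct and follows the same overall architecture as the paper's proof: bound the mass $F_j(A_j)-G(A_j)$ from below via the Hellinger/total-variation assumption, bound it from above by $O_j$ times a quantity that vanishes under weak convergence, and conclude. The difference lies in how the upper bound is obtained. The paper works directly with the L\'evy--Prokhorov distance $\delta_j=d_w(F_j,G)$, uses the defining inequality $F_j(A_j)\le G(A_j^{\delta_j})+\delta_j$, and then bounds $G(A_j^{\delta_j}\setminus A_j)\le 2O_j\sup_x\{G(x+\delta_j)-G(x)\}$, invoking uniform continuity of $G$ to send the modulus to zero. You instead switch to the Kolmogorov distance via P\'olya's theorem and read each interval's contribution as a difference of CDF increments, $\int_{a_i}^{b_i}(f_j-g)=[F_j-G](b_i)-[F_j-G](a_i)\le 2\delta_j$. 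Your route is slightly more elementary (no $\delta$-enlargement bookkeeping) and yields the explicit quantitative bound $O_j\ge \varepsilon^2/(4\delta_j)$, whereas the paper's version is phrased purely in terms of $d_w$ and the modulus of continuity of $G$; both hinge on continuity of $G$, which is where the absolute continuity of the target enters in either case.
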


The preceding result implies that, if sequential identifiability fails at some \(\theta \in \Theta\), then the model must contain a sequence of densities that oscillate arbitrarily frequently around \(f_\theta\). Figure~\ref{fig:kde_oscillations} in Subsection~\ref{sub:new_perspective} provides a graphical illustration of the behavior of such a sequence. Consequently, ruling out this kind of pathological behavior is sufficient to guarantee posterior consistency. In practice, due to the inherently limited expressive power of finite-dimensional models (with respect to weakly approximating distributions outside the proper parameter space), posterior inconsistency from this mechanism could arise only if the modeler had precise knowledge of the true density and intentionally introduced carefully constructed oscillations around it. Notice that these oscillations would not only need to be present, but also to integrate in such a way as to yield the correct distribution function away from the true parameter value. Clearly, such a contrived construction is implausible in any real-world parametric modeling scenario, rendering our simple sequential identifiability condition effectively universal for parametric consistency.

The role of oscillations will be further explored in Section~\ref{sec:counterexample} using a simple illustrative parametric model. Before doing so, we review a few common parametric families and show how consistency can be easily established using the conditions introduced in this section.

\subsection{Example 1: exponential families}

Assume that, for each $\theta \in \Theta\subseteq \RR^p$, the associated density satisfies
\begin{equation*}
    f_\theta(x) \propto h(x) \exp\left\{\theta^\top T(x)\right\}, \quad x\in\RR.
\end{equation*}
Then $\mathcal{F}_\Theta$ defines a $d$-dimensional exponential family with sufficient statistics $T(x) \in \mathbb{R}^p$ \citep{efron2022}. This general form includes several classical parametric models for continuous data—such as the Gaussian, exponential, gamma, beta, Laplace, Rayleigh, Weibull, and von Mises distributions—for which sequential identifiability (and therefore posterior consistency) is readily verified.

\subsection{Example 2: uniform distribution}

Another basic parametric model not expressible as an exponential family is the uniform distribution on $[0,\theta]$ for $\theta \in \Theta = (0, \infty)$. Specifically, assume $f_\theta(x) \propto 1_{[0,\theta]}(x)$. We show that sequential identifiability holds by contraposition: a sequence $(\theta_j)_{j\in\NN}$ does not converge in $\Theta$ either if $\liminf_{j\to\infty}\theta_j\neq\limsup_{j\to\infty}\theta_j$ (in which case $(F_{\theta_j})_{j\in\NN}$ clearly has no weak limit), or if $\lim_{j\to\infty}\theta_j = \ell \in\{0,\infty\}$. If $\ell=0$, then $f_{\theta_j}\wc \delta_0$, which does not admit density and therefore does not belong to the model. If instead $\ell=\infty$, $(F_{\theta_j})_{j\in\NN}$ is not tight and, by Prokhorov's theorem, it does not converge weakly to any probability distribution.

\subsection{Example 3: finite mixture models}

Consider a normal mixture model with a finite number $K$ of components. For ease of exposition, we circumvent the usual parameter identifiability issues associated with mixtures \citep{teicher1963identifiability} and work directly on the space of mixture densities equipped with the Hellinger metric:\footnote{Nevertheless, by imposing standard identifiability constraints on the mixture parameters, the following analysis can be extended to the usual Euclidean setting. We also note that the same line of reasoning applies to mixtures with more general kernel families.} specifically, the $K$-component normal mixture model is defined as
\begin{equation*}
    \mathcal F^K := \left\{\sum_{k=1}^K w_k\, \lambda_k^{1/2}\phi\left(\lambda_k^{1/2}(\cdot-\mu_k)\right) : (\boldsymbol{w}, \boldsymbol{\mu}, \boldsymbol{\lambda}) \in\Delta_K\times\RR^K\times(0,\infty)^K \right\},
\end{equation*}
where $\Delta_K$ denotes the $(K-1)$-dimensional simplex and $\phi(x) := (2\pi)^{-1/2}e^{-x^2/2}$ for all $x \in \RR$. Since any two densities $f, g \in \mathcal F^K$ are continuous, we may invoke Theorem~\ref{thm:oscillations} to establish sequential identifiability, and hence posterior consistency. Indeed, for any fixed $K \in \NN$, the number of oscillations of $f$ around $g$ is bounded above by a finite constant (common to all $f,g\in\mathcal F^K$), implying that weak convergence within $\mathcal F_\Theta$ entails Hellinger convergence. Consequently, by Theorem~\ref{thm:general_parametric_consistency}, any density in the KL support of the prior will exhibit posterior consistency.

\section{An illustrative model}\label{sec:counterexample}

We are now in a position to illustrate our theory with a simple yet instructive parametric model. Specifically, we restrict the sample space to $[0,1]$ and consider the one-dimensional parameter space $\Theta = [0, \infty)$, where the family $\mathcal F_\Theta$ consists of densities of the form
\begin{equation}\label{eq:sin_density}
    f_\theta(x) = \frac{1 + \cos(\theta x)}{1 + \sin(\theta)/\theta}.
\end{equation}
To be precise, we adopt the convention that $f_0$ is defined as the continuous extension of the above expression as $\theta \to 0$, yielding $f_0(x) = 1$ for all $x \in [0,1]$—that is, $F_0$ corresponds to the uniform distribution. The resulting family of CDFs is of the form
\begin{equation*}
    F_\theta(x) = \frac{x + \sin(\theta x)/\theta}{1 + \sin(\theta)/\theta}, \quad \forall x\in[0,1].
\end{equation*}

As illustrated in Figure~\ref{fig:cos_densities}, increasing values of $\theta$ induce more pronounced oscillations in the density around the value 1, while the corresponding CDF converges to $F_0$ (cf.\ Figure \ref{fig:kde_oscillations} in Section~\ref{sec:seq_identifiability}). We summarize these and other basic properties of the model in the next proposition.

\begin{figure}
    \centering
    \includegraphics[width=0.75\linewidth]{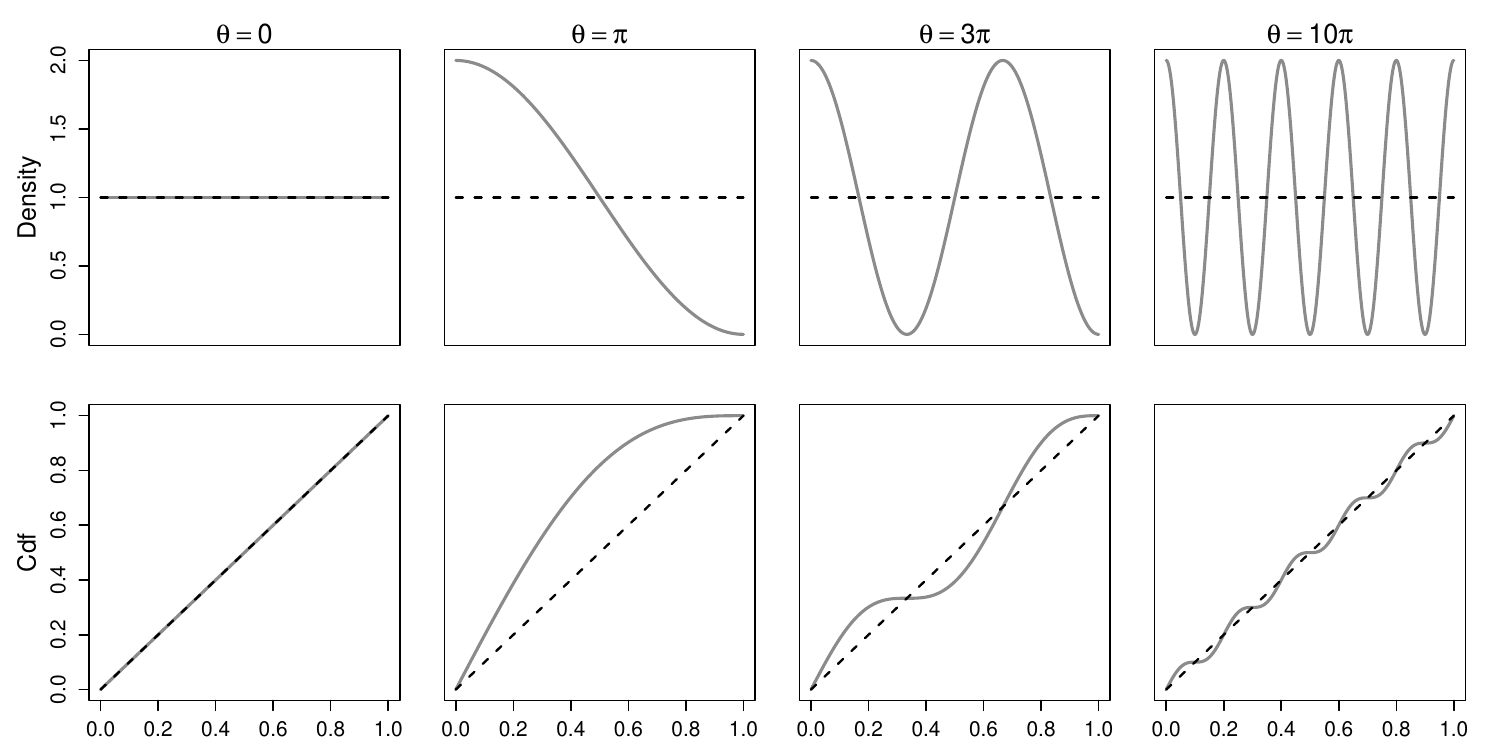}
    \caption{Density functions (solid gray, top row) and CDFs (solid gray, bottom row) in the illustrative parametric model, shown for increasing values of the parameter $\theta$. As $\theta$ grows, the density becomes increasingly oscillatory while the CDF converges to $F_0$ (dashed black).}
    \label{fig:cos_densities}
\end{figure}

\begin{proposition}\label{pro:properties_counterexample}
    The parametric family $\mathcal F_\Theta$ defined by \eqref{eq:sin_density} satisfies the following properties:
    \begin{enumerate}
        \item The Euclidean and Hellinger metrics are equivalent on $\mathcal F_\Theta$;
        \item For any $\theta \geq 0$ and prior $\Pi$ on $\Theta$, if $\Pi(A) > 0$ for every Euclidean neighborhood $A$ of $\theta$, then $\theta\in\kls(\Pi)$;
        \item $\mathcal F_\Theta$ is sequentially identifiable at all $\theta > 0$;
        \item As $\theta \to \infty$, $F_\theta \wc F_0$, although $f_\theta$ does not converge to any density in the Hellinger sense.
    \end{enumerate}
\end{proposition}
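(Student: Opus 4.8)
The plan is to verify each of the four properties of the family in \eqref{eq:sin_density} in turn, most of which reduce to elementary calculus estimates once the explicit forms of $f_\theta$ and $F_\theta$ are exploited. Throughout I would write $c(\theta):=1+\sin(\theta)/\theta$ for the normalizing constant, noting that $c(\theta)\to 1$ as $\theta\to\infty$ and that $c$ is bounded away from $0$ and $\infty$ on $[0,\infty)$ (including the value $c(0)=2$ by continuous extension), since $\sin(\theta)/\theta\in[-1,1]$ and in fact $\sin(\theta)/\theta>-1$ for all $\theta$. This boundedness of $c(\theta)$ is what will make the various estimates uniform.

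For property (1), I would show that $\theta\mapsto f_\theta$ is continuous and injective into $(\mathcal F_\Theta,d_h)$ with continuous inverse, which given closedness of $\Theta$ amounts to checking that Euclidean and Hellinger convergence coincide. One direction (Euclidean $\Rightarrow$ Hellinger) follows from the joint continuity of $(\theta,x)\mapsto f_\theta(x)$, boundedness of the densities on the compact sample space $[0,1]$, and dominated convergence. The converse (Hellinger $\Rightarrow$ Euclidean) is the substantive direction: I would argue that if $\theta_k\not\to\theta$ then $d_h(f_{\theta_k},f_\theta)\not\to0$, handling separately the case of a bounded subsequence (where a convergent sub-subsequence has a distinct limit, contradicting injectivity, which for \emph{finite} distinct $\theta,\theta'$ follows because $\cos(\theta x)\not\equiv\cos(\theta' x)$ on $[0,1]$) and the case $\theta_k\to\infty$ (where property (4) will show the Hellinger distance stays bounded away from $0$). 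For property (2), since the densities are continuous and uniformly bounded above and below away from $0$ on $[0,1]$ (the denominator is bounded and $1+\cos(\theta x)\ge0$, with the zero set having measure zero), the log-likelihood ratio $\ln(f_{\theta_\star}/f_\theta)$ is bounded on a neighborhood, so $\kl(f_{\theta_\star},f_\theta)\to0$ as $\theta\to\theta_\star$ by dominated convergence; a neighborhood of positive prior mass then gives a KL ball of positive mass.

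Property (4) is where the key computation lies, and it also feeds back into (1) and (3). The weak convergence $F_\theta\wc F_0$ I would establish by showing $F_\theta(x)\to x$ pointwise on $[0,1]$: this is immediate from the explicit CDF, since $\sin(\theta x)/\theta\to0$ and $c(\theta)\to1$ as $\theta\to\infty$, giving $F_\theta(x)\to x=F_0(x)$ at every continuity point of the uniform CDF. For the non-convergence in Hellinger distance, the cleanest route is to exhibit a fixed lower bound on $d_h(f_\theta,f_0)$ (or more strongly, to show $d_h(f_\theta,f_0)$ does not tend to $0$): I would compute or bound $\int_0^1(\sqrt{f_\theta}-1)^2\,dx$ and observe that the oscillatory factor $\cos(\theta x)$ does not flatten out as $\theta\to\infty$ --- for instance, $\int_0^1 f_\theta(x)^{1/2}\,dx\to \int_0^1(1+\cos u)^{1/2}\,\mathrm{d}(\text{average})<1$ by a Riemann--Lebesgue / equidistribution argument, so the Hellinger affinity stays bounded away from $1$.

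For property (3), sequential identifiability at each $\theta_\star>0$, I would argue by contraposition as in the uniform example: take a sequence $(\theta_j)$ with $F_{\theta_j}\wc F_{\theta_\star}$ and show $\theta_j\to\theta_\star$. A weakly convergent sequence of these CDFs forces $(\theta_j)$ to be bounded (an unbounded subsequence would, by property (4), push the limit toward $F_0\neq F_{\theta_\star}$ since $\theta_\star>0$), so by Bolzano--Weierstrass every subsequence has a further subsequence converging to some finite $\theta_\infty$; continuity of $\theta\mapsto F_\theta$ gives $F_{\theta_\infty}=F_{\theta_\star}$, and finite-parameter injectivity forces $\theta_\infty=\theta_\star$, whence $\theta_j\to\theta_\star$. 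The main obstacle I anticipate is the quantitative Hellinger lower bound in (4): one must rule out that the increasingly rapid oscillations average away to leave $f_\theta$ close to $f_0=1$ in Hellinger distance. The resolution is precisely that the Hellinger affinity involves $\sqrt{1+\cos(\theta x)}$ rather than $1+\cos(\theta x)$, and by Jensen's inequality (strict, since $\sqrt{\cdot}$ is strictly concave and $\cos(\theta x)$ is genuinely non-constant) the limiting spatial average of $\sqrt{1+\cos(\theta x)}$ is strictly less than $\sqrt{1+\overline{\cos}}=1$, which is exactly what guarantees a uniform gap and hence non-convergence in density despite weak convergence of the CDFs.
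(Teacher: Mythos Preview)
Your arguments for properties (1), (3), and (4) are sound and largely parallel the paper's, with one pleasant difference: for the Hellinger non-convergence in (4) you compute the limiting Hellinger affinity directly via a Riemann-averaging argument and strict Jensen on $\sqrt{1+\cos u}$, obtaining a quantitative gap $\int_0^1\sqrt{f_\theta}\,dx\to\frac{2\sqrt2}{\pi}<1$. The paper instead argues by contradiction that Hellinger convergence would force a.e.\ pointwise convergence along a subsequence, and then shows via equidistribution of $\{kx\}$ that $f_{2\pi k}(x)$ has dense range in $[0,2]$ for irrational $x$. Your route is shorter and gives an explicit lower bound on $d_h(f_\theta,f_0)$, which is useful; the paper's route is more qualitative but avoids computing the average.

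There is, however, a genuine gap in your treatment of property (2). You assert that the densities are ``uniformly bounded above and below away from $0$ on $[0,1]$'' and conclude that $\ln(f_{\theta_\star}/f_\theta)$ is bounded, whence $\kl(f_{\theta_\star},f_\theta)\to0$ by dominated convergence. But the densities are \emph{not} bounded away from zero: for any $\theta\ge\pi$, $f_\theta$ vanishes at every $x=(2k+1)\pi/\theta\in[0,1]$, so the log-likelihood ratio blows up near these points and your dominating function does not exist. Acknowledging that the zero set has Lebesgue measure zero does not help, because the singularity of $-\ln f_\theta$ near each zero is genuine (of order $-2\ln|x-x_0|$), and the relevant question is whether the \emph{integral} $\int_0^1 f_{\theta_\star}(x)\ln\!\bigl(f_{\theta_\star}(x)/f_\theta(x)\bigr)\,dx$ tends to zero, not whether the integrand is bounded. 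The paper resolves this by a second-order Taylor expansion of $\theta'\mapsto\ln\!\bigl((1+\cos s)/(1+\cos(\theta's/\theta))\bigr)\cdot(1+\cos s)$ around $\theta'=\theta$: the crucial point is that the factor $1+\cos s$ (coming from $f_{\theta_\star}$) cancels the $1/(1+\cos s)$ singularity in the derivatives of the log, yielding $|g'(\theta)|\le1$ and a uniformly bounded $g''$ for $\theta'$ close to $\theta$, hence $\kl(f_\theta,f_{\theta'})\lesssim|\theta-\theta'|$. Your argument would need either this cancellation mechanism or a separate uniform-integrability argument; as written, it fails precisely for all $\theta_\star\ge\pi$.
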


The first property confirms that the assumption of equivalence between the Euclidean and Hellinger metrics, made throughout this article, holds in this setting as well. This allows us to move freely between the two notions of convergence without loss of generality. The second property implies that, for the KL support condition to hold at all parameter values, it suffices for the prior to have full support on $\Theta$. Combined with the third property, this enables us to invoke Theorem~\ref{thm:general_parametric_consistency} and obtain the following corollary.

\begin{corollary}
    If the prior $\Pi$ has full support on $[0,\infty)$, then the posterior is consistent at all $\theta > 0$.
\end{corollary}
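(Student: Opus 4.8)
The plan is to derive the corollary as a direct application of Theorem~\ref{thm:general_parametric_consistency}, assembling the relevant pieces already catalogued in Proposition~\ref{pro:properties_counterexample}. Fix any $\theta_\star > 0$. To invoke the theorem I must verify its two hypotheses: that $\theta_\star \in \kls(\Pi)$ and that $\mathcal F_\Theta$ is sequentially identifiable at $\theta_\star$. The first follows immediately from the full-support assumption together with property~2 of the proposition: since $\Pi$ has full support on $[0,\infty)$, every Euclidean neighborhood $A$ of $\theta_\star$ satisfies $\Pi(A) > 0$, and property~2 then upgrades this to $\theta_\star \in \kls(\Pi)$. The second hypothesis is exactly property~3, which asserts sequential identifiability at every $\theta > 0$. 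With both hypotheses in hand, Theorem~\ref{thm:general_parametric_consistency} yields posterior consistency at $\theta_\star$, and since $\theta_\star > 0$ was arbitrary, consistency holds at all $\theta > 0$.

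Concretely, the proof is essentially a two-line deduction: first establish the KL support membership via the chain ``full support $\Rightarrow$ positive mass on all neighborhoods $\Rightarrow$ (by property~2) $\theta_\star \in \kls(\Pi)$,'' and then combine this with sequential identifiability (property~3) to apply the central theorem. No fresh analytic estimates are required here, because all the genuine work---the metric equivalence, the KL support characterization, and the verification of sequential identifiability away from the origin---has been discharged in the proposition.

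The only point demanding care, and the reason the corollary is restricted to $\theta > 0$, is that sequential identifiability genuinely fails at $\theta_\star = 0$: property~4 shows $F_\theta \wc F_0$ as $\theta \to \infty$ while $f_\theta$ does not converge to $f_0$ in Hellinger distance, so the sequence $\theta_j = j$ witnesses weak approximation of $F_0$ from a region escaping to infinity. This means I should not attempt to extend the conclusion to $\theta_\star = 0$; the corollary's hypothesis and conclusion are correctly scoped to the range $\theta > 0$ where property~3 applies. I would therefore flag that the restriction to strictly positive $\theta$ is not a technical artifact but reflects the actual sequential unidentifiability at the single isolated value $\theta = 0$, which is precisely the kind of self-inflicted, measure-zero pathology discussed in the surrounding text.

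Thus the main (and only mild) obstacle is simply to be scrupulous about \emph{which} parameter values the hypotheses cover: the argument is a clean invocation of the machinery, and the substance lies entirely in having correctly isolated $\theta = 0$ as the lone exceptional point in Proposition~\ref{pro:properties_counterexample}.
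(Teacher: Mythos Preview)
Your proposal is correct and matches the paper's own reasoning exactly: the paper treats the corollary as an immediate consequence of combining property~2 (full support implies the KL support condition) and property~3 (sequential identifiability at all $\theta>0$) of Proposition~\ref{pro:properties_counterexample} to verify the hypotheses of Theorem~\ref{thm:general_parametric_consistency}. No separate proof is given in the paper beyond the sentence preceding the corollary, so your write-up is, if anything, more explicit than what the authors provide.
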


This result is remarkable because, as we demonstrate in Subsection~\ref{sub:failure_conditions} below, the family $\mathcal F_\Theta$ fails to satisfy the classical sufficient conditions for consistency proposed in earlier literature (cf.\ Section~\ref{sec:classical_conditions}). Nonetheless, our simple criterion of sequential identifiability seamlessly yields posterior consistency at all $\theta>0$.

Due to the fourth property in Proposition~\ref{pro:properties_counterexample}, however, the same argument does not extend to \(\theta = 0\), as the model is sequentially unidentifiable at that point. In particular, while a prior with full support guarantees posterior concentration in weak neighborhoods of \(F_0\) (thanks to Schwartz's theorem), such concentration may occur in the ``wrong'' region of the parameter space—namely, toward infinity. In line with Theorem~\ref{thm:oscillations}, this happens because the cosine-based model oscillates around the uniform density with increasing frequency as the parameter \(\theta\) diverges.

\subsection{Some remarks on the illustrative model}

Before continuing the formal analysis of our example, a few important remarks about its construction are in order. First, the density model defined by \eqref{eq:sin_density} demonstrates that, although sequential unidentifiability is theoretically possible, it is extremely unlikely to occur in any reasonably specified parametric model, and thus poses little practical threat to posterior consistency. In our example, inconsistency can arise only if the true parameter is $\theta_\star = 0$, a scenario we engineered by precisely tailoring cosine-based oscillations around the corresponding (uniform) density. Such a construction is implausible in real-world settings: not only are densities with pathologically frequent oscillations rarely found in practical parametric models, but even if present, it is highly unlikely that they would align so precisely with the true (and unknown) density. Since a parametric model has only limited capacity to generate these oscillations around any given target, the conditions for inconsistency require both unlikely model structure and foreknowledge of the data-generating process---making sequential unidentifiability an essentially self-inflicted phenomenon for the purposes of parametric inference.

A more subtle point arises by noticing that our construction can be generalized by creating problematic oscillatory behavior around an entire parametric family, rather than just the uniform distribution used in our example. For instance,\footnote{The following construction starts from a simple parametric family $\mathcal G$ to keep a clear notation, though it can be further extended to more general families.} consider the parametric family $\mathcal{G}$ containing all densities of the form
\begin{equation*}
    g_\lambda(x) = \lambda^{-1} 1_{[0, \lambda]}(x)
\end{equation*}
for $\lambda \in \Lambda \subseteq [1, 2]$, and construct a new model $\mathcal{H}$ containing all densities of the form
\begin{equation*}
    h_{\lambda, \omega}(x) = \frac{g_\lambda(x) + \mu \cos(\omega x)}{1 + \mu \sin(\omega \lambda)/\omega} \, 1_{[0, \lambda]}(x)
\end{equation*}
for $\lambda \in \Lambda$ and $\omega \in \Omega := [0, \infty)$, where $\mu \in (0,1)$ is a constant small enough (depending on $\Lambda$) to ensure positivity of all members of $\mathcal{H}$. Our illustrative example is then recovered if $\Lambda = \{1\}$ and $\mu=1$. The associated CDFs satisfy
\begin{equation*}
    H_{\lambda, \omega}(x) = \frac{x/\lambda + \mu \sin(\omega x)/\omega}{1 + \mu \sin(\omega \lambda)/\omega}
\end{equation*}
for $x \in [0, \lambda]$, so that $H_{\lambda, 0} = G_\lambda$ (by continuous extension) and $H_{\lambda, \omega} \wc G_\lambda$ (while densities do not converge) as $\omega \to \infty$, for all $\lambda \in \Lambda$. That is, by introducing an auxiliary oscillation parameter $\omega$, one is able to induce sequential unidentifiability at every member of the original family $\mathcal{G}$. While this procedure makes it possible to create sequential unidentifiability on a continuum of densities rather than just a few isolated ones,\footnote{Notice that sequential unidentifiability still only holds on a proper \emph{sub}space of the parameter space of $\mathcal{H}$---that is, on a lower-dimensional subset---as opposed to holding everywhere, as is typical in nonparametric models.} the construction still reinforces our key message: starting from any parametric family of actual modeling interest, such as $\mathcal{G}$ when estimating the support of a uniform distribution, inducing sequential unidentifiability requires an adversarial effort involving deliberate expansion of the parameter space to engineer oscillations. Thus, while sequential unidentifiability is theoretically possible, it remains an artificial phenomenon in any realistic parametric modeling context.

\subsection{Failure of classical conditions}\label{sub:failure_conditions}

While our principled analysis immediately established consistency for all \(\theta > 0\), we now show that the oscillatory behavior of the densities defined in \eqref{eq:sin_density} violates the classical regularity conditions commonly used to ensure posterior consistency. As a consequence, these existing results are inadequate for analyzing posterior convergence in this model.

In particular, with respect to the conditions introduced by \cite{walker1969}, we have the following proposition.

\begin{proposition}\label{pro:failure_walker}
    The parametric model in \eqref{eq:sin_density} fails assumptions~\Wref{walker_ass1} and~\Wref{walker_ass3} (cf.\ Section~\ref{sec:classical_conditions}).
\end{proposition}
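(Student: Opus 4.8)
The plan is to treat the two assumptions separately, with both failures traced directly to the oscillatory structure of the family \eqref{eq:sin_density}.

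For \Wref{walker_ass1} the argument is elementary: I would compute the zero set explicitly as $\mathcal O_\theta = \{x\in[0,1] : \cos(\theta x) = -1\} = \{(2k+1)\pi/\theta : k = 0,1,2,\dots,\ (2k+1)\pi \le \theta\}$ and simply exhibit two parameters with distinct zero sets. For instance $\mathcal O_\theta = \emptyset$ whenever $0\le\theta<\pi$ (with the uniform density $f_0\equiv 1$ the extreme case), whereas $\mathcal O_\pi = \{1\}$ and $\mathcal O_{3\pi} = \{1/3,\,1\}$. Since these differ, no common zero set exists across $\Theta=[0,\infty)$, and \Wref{walker_ass1} fails.

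For \Wref{walker_ass3} I would test the condition at the single parameter $\theta_\star = 0$, for which $f_0\equiv 1$ and $\ln f_0\equiv 0$. The crux is that, for every fixed $x\in(0,1]$, the density returns arbitrarily close to its ceiling of $2$ along arbitrarily large parameters: choosing $\theta = 2k\pi/x$ gives $\cos(\theta x)=1$ while $\sin(\theta)/\theta\to 0$ as $k\to\infty$, so $f_\theta(x)\to 2$. Hence $S_M(x) := \sup_{|\theta|>M}\ln f_\theta(x)$ is non-increasing in $M$ and decreases to $\ln 2$ for every $x\in(0,1]$. Because any admissible dominating function must satisfy $K_M(x,0)\ge \sup_{|\theta|>M}[\ln f_\theta(x) - \ln f_0(x)] = S_M(x)$ a.e., monotone (equivalently dominated, since $S_M$ is bounded for $M\ge 1$) convergence yields
\[
\lim_{M\to\infty}\int_0^1 K_M(x,0)\,\mathrm dx \;\ge\; \lim_{M\to\infty}\int_0^1 S_M(x)\,\mathrm dx \;=\; \int_0^1 \ln 2\,\mathrm dx \;=\; \ln 2 \;>\; 0,
\]
which is incompatible with the requirement that this limit be strictly negative. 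Thus no valid $K_M$ exists at $\theta_\star = 0$, and since \Wref{walker_ass3} is a universal statement over $\theta_\star\in\Theta$, this single counterexample suffices to show it fails.

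The only genuinely delicate points — and hence the main obstacle — lie in the \Wref{walker_ass3} step. First, I must rigorously establish the pointwise limit $S_M(x)\downarrow\ln 2$, which rests on the unboundedness of $\{2k\pi/x : k\in\NN\}$ for each fixed $x>0$ together with $\sin(\theta)/\theta\to 0$; the small subtlety is that $f_\theta(x)$ can slightly exceed $2$ at finite $\theta$ because the normalizing denominator dips below $1$, but this excess vanishes as $M\to\infty$, so the decreasing limit is exactly $\ln 2$. Second, the interchange of limit and integral must be justified, which follows cleanly from the monotonicity of $M\mapsto S_M(x)$ and the uniform boundedness of the integrand for $M\ge 1$. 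Conceptually, the obstruction is precisely the phenomenon anticipated in Section~\ref{sec:introduction}: the model's indefinitely frequent oscillations keep every point arbitrarily near a likelihood peak as $\theta\to\infty$, so the log-likelihood ratios can never exhibit the integrable tail decay that \Wref{walker_ass3} demands.
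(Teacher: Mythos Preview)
Your proof is correct. For \Wref{walker_ass1} you and the paper do the same thing. For \Wref{walker_ass3} you take a genuinely different and more elementary route: you test only at $\theta_\star = 0$, where $f_0\equiv 1$ makes the bookkeeping trivial, and you produce the large-$\theta$ peaks via the explicit periodic sequence $\theta = 2k\pi/x$. The paper instead shows that \Wref{walker_ass3} fails at \emph{every} $\theta_\star\ge 0$: it invokes the Dirichlet simultaneous-approximation argument from the proof of Theorem~\ref{thm:MLE_inconsistency} to place peaks above any $M$, and then needs the auxiliary bound $\int_0^1 f_{\theta_\star}^2(x)\,\mathrm dx < 2$ (from Lemma~\ref{lem:upperbound_prod_likelihood}) to conclude $\int_0^1 f_{\theta_\star}\ln f_{\theta_\star}\,\mathrm dx < \ln 2$, so that $\int K_M f_{\theta_\star}\,\mathrm dx$ stays bounded away from zero. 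Your approach buys simplicity and self-containment at the cost of scope; the paper's buys the stronger conclusion that the classical assumption breaks down uniformly over the whole parameter space, which reinforces the narrative that regularity fails globally rather than only at the sequentially unidentifiable point. Since the proposition as stated only asserts failure of \Wref{walker_ass3}, and that assumption is universally quantified over $\theta_\star$, your single counterexample at $\theta_\star=0$ is logically sufficient. One minor tightening: you do not actually need the monotone-convergence step, since $S_M(x)\ge \ln 2$ holds for every $M$ and every $x\in(0,1]$ directly (the sequence $\theta_k = 2k\pi/x$ has $\limsup_k f_{\theta_k}(x) = 2$, so the tail supremum is always at least $2$), giving $\int_0^1 K_M(x,0)\,\mathrm dx \ge \ln 2$ for all $M$ without passing to the limit.
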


Heuristically, the failure of assumption~\Wref{walker_ass1} stems from the model’s oscillations causing the density to vanish at varying points on the support. The failure of assumption~\Wref{walker_ass3}, on the other hand, arises from the persistence of oscillations in the likelihood function even as \(\theta \to \infty\).

More fundamentally, we next establish that, under this model, the MLE is inconsistent at all data-generating parameter values. This result reinforces the limitations of the approach developed by \cite{walker1969}, which relies on model regularity to infer posterior consistency from MLE consistency. The inconsistency of the MLE renders the methodology of \cite{walker2001} inapplicable as well, since that framework also presupposes regular MLE behavior.

\begin{theorem}\label{thm:MLE_inconsistency}
    For all true data-generating parameters \(\theta_\star \geq 0\), maximum likelihood estimation is inconsistent. Specifically, for all \(M > 0\),
    \begin{equation*}
        \max_{\theta \in [0, M]} \prod_{i=1}^n f_\theta(X_i) < \sup_{\theta \geq 0} \prod_{i=1}^n f_\theta(X_i)
    \end{equation*}
    ultimately a.s.-\(F_{\theta_\star}^\infty\).
\end{theorem}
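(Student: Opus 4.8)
The plan is to compare the global supremum of the likelihood against its maximum over $[0,M]$ by proving that the former is at least $2^n$ while the latter is eventually strictly below $2^n$. Writing
\[
\prod_{i=1}^n f_\theta(X_i) = c(\theta)^{-n}\prod_{i=1}^n\bigl(1+\cos(\theta X_i)\bigr), \qquad c(\theta):=1+\frac{\sin\theta}{\theta},
\]
the two bounds come from genuinely different mechanisms: a number-theoretic alignment of the data with the cosine peaks drives the global supremum up, whereas a uniform law of large numbers together with an entropy estimate pins the compact maximum down. Throughout, note that $c(\theta)\to1$ as $\theta\to\infty$ and that $c$ is continuous, positive, and bounded on $[0,M]$.

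For the lower bound on $\sup_{\theta\ge0}\prod_i f_\theta(X_i)$, I would first observe that, since $F_{\theta_\star}$ admits a density, the observations $X_1,\dots,X_n$ are almost surely linearly independent over $\mathbb{Q}$ (the rationally dependent tuples form a Lebesgue-null countable union of hyperplanes, so the whole sequence has this property for every finite subcollection on a probability-one event). Weyl's equidistribution theorem then furnishes a sequence $\theta_k\to\infty$ along which $(\theta_k X_1,\dots,\theta_k X_n)$ approaches $0$ modulo $2\pi$ in every coordinate, whence $\cos(\theta_k X_i)\to1$ for all $i$ and $c(\theta_k)\to1$. Therefore $\prod_i f_{\theta_k}(X_i)\to 2^n$, and since $\sup_{\theta\ge0}\prod_i f_\theta(X_i)\ge\limsup_{\theta\to\infty}\prod_i f_\theta(X_i)$, we obtain $\sup_{\theta\ge0}\prod_i f_\theta(X_i)\ge 2^n$ almost surely, for every $n$.

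For the upper bound on the compact maximum I would run the classical Wald compactness argument on $[0,M]$. The map $\theta\mapsto\log f_\theta(x)$ is continuous as an extended-real function (with value $-\infty$ exactly where $1+\cos(\theta x)=0$) and is bounded above by the constant $\log\bigl(2/\min_{[0,M]}c\bigr)$, so covering $[0,M]$ by small balls $U_j$ and applying the strong law to the integrable upper envelopes $\sup_{\theta\in U_j}\log f_\theta$ gives, almost surely,
\[
\limsup_{n\to\infty}\ \max_{\theta\in[0,M]}\frac1n\sum_{i=1}^n\log f_\theta(X_i)\ \le\ \max_{\theta\in[0,M]}\Exs_{\theta_\star}\!\bigl[\log f_\theta(X)\bigr]\ \le\ \Exs_{\theta_\star}\!\bigl[\log f_{\theta_\star}(X)\bigr],
\]
the last step being Jensen's inequality in the form $\Exs_{\theta_\star}[\log(f_\theta/f_{\theta_\star})]\le 0$. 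It then suffices to show $\Exs_{\theta_\star}[\log f_{\theta_\star}(X)]<\log 2$. Here I would apply Jensen once more, now as $\Exs_{\theta_\star}[\log f_{\theta_\star}(X)]\le\log\Exs_{\theta_\star}[f_{\theta_\star}(X)]=\log\int_0^1 f_{\theta_\star}^2\,\mathrm dx$, reducing the whole matter to the elementary trigonometric estimate $\int_0^1 f_{\theta_\star}(x)^2\,\mathrm dx<2$; expanding the square and substituting $\sin(\theta_\star)/\theta_\star=c(\theta_\star)-1$, with $c(\theta_\star)$ staying in a compact subinterval of $(0,2]$, yields this bound uniformly in $\theta_\star\ge0$.

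Combining the two bounds, for small $\varepsilon>0$ we eventually have $\max_{\theta\in[0,M]}\frac1n\log\prod_i f_\theta(X_i)<\log2-\varepsilon$, hence $\max_{\theta\in[0,M]}\prod_i f_\theta(X_i)<2^n\le\sup_{\theta\ge0}\prod_i f_\theta(X_i)$ ultimately a.s.-$F_{\theta_\star}^\infty$; since $M$ is arbitrary, no maximizer stays in a bounded region, so the MLE is inconsistent. I expect the main obstacle to be making the equidistribution step fully rigorous and almost sure — confirming the rational independence of the sample and invoking Weyl's theorem to force the product all the way to $2^n$ — whereas the uniform upper bound is routine Wald theory and $\int f_{\theta_\star}^2<2$ is a short computation.
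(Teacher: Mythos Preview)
Your proposal is correct and shares the paper's two-step skeleton: show the unrestricted supremum of the likelihood is at least $2^n$ by simultaneously aligning the data with cosine peaks, then show the maximum over $[0,M]$ eventually falls strictly below $2^n$. The implementations, however, differ. For the lower bound, the paper invokes Dirichlet's simultaneous approximation theorem on the numbers $X_i/(2\pi)$, whereas you use Weyl equidistribution of the linear flow $\theta\mapsto(\theta X_1,\dots,\theta X_n)$ on the torus; your rational-independence hypothesis is precisely what drives the continuous-flow version, while the paper only needs the milder condition that each $X_i/\pi$ be irrational. For the upper bound the two arguments genuinely diverge. The paper proves the off-diagonal moment bound $\int_0^1 f_\theta f_{\theta_\star}\,\mathrm dx\le c<2$ uniformly in $\theta,\theta_\star$, applies Markov's inequality to $\prod_i(f_\theta(X_i)+\varepsilon)$, and closes with a finite covering of $[0,M]$ plus Borel--Cantelli. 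Your Wald-type uniform SLLN on $\log f_\theta$ is the classical MLE-consistency machinery and is slightly more economical: via nonnegativity of the KL divergence and one further Jensen step it reduces to the purely diagonal estimate $\int_0^1 f_{\theta_\star}^2<2$, which is the $\theta=\theta_\star$ case of the paper's lemma. The trade-off is that the paper's first-moment route yields an explicit exponential bound $\max_{[0,M]}\prod_i f_\theta(X_i)<b^n$ with $b<2$, a quantitative statement that is later reused when establishing posterior consistency at $\theta_\star=0$; your Wald argument gives the required strict inequality but no such rate without additional work.
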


The proof of Theorem~\ref{thm:MLE_inconsistency} relies on a classical number-theoretic result, namely Dirichlet's simultaneous approximation theorem \citep{schmidt2009diophantine}. Heuristically, the argument proceeds in two steps. First, we show that any MLE based on \(n\) observations must achieve a product likelihood of at least \(2^n\), as it is always possible to select a value of \(\theta\) such that each data point aligns with a peak of the cosine oscillations (which, as can be readily verified, reach values of around 2). Second, we demonstrate that, for all sufficiently large \(n\), with probability 1, no likelihood maximizer restricted to \([0, M]\) for any fixed \(M > 0\) can attain this lower bound. This establishes the asymptotic failure of the maximum likelihood principle—and, consequently, of classical approaches to posterior consistency—at all $\theta_\star \geq 0$.

\subsection{Consistency at the uniform density}\label{sub:consistency_at_0}

As we have already discussed, while all classical approaches fail to ensure posterior consistency for the model under consideration, our Theorem~\ref{thm:general_parametric_consistency} easily establishes the result for all $\theta_\star > 0$. At $\theta_\star = 0$, however, we have shown that sequential identifiability is violated, as $F_\theta \wc F_0$ when $\theta \to \infty$, and therefore Theorem~\ref{thm:general_parametric_consistency} does not directly apply. Nonetheless, we now demonstrate that, by employing standard techniques from the literature on nonparametric models, consistency at $\theta_\star$ can still be obtained under very mild conditions on the prior.

The first approach we consider is the one introduced in the seminal papers by \cite{ghosal1999} and \cite{barron1999}, which establishes exponential decay of the posterior mass outside Hellinger neighborhoods of $\theta_\star$ by splitting the complement of such neighborhoods into two $n$-dependent regions: the first has slowly growing complexity—e.g., measured in terms of Hellinger metric or upper-bracketing entropy \citep{vandegeer2000empirical, wainwright2019high}—forcing the likelihood ratio to decay exponentially; the second  region is directly assumed to have exponentially small prior (hence posterior) mass. Together with the KL support assumption and an upper-bound on $d_h(f_\theta, f_{\theta'})$ by the corresponding Euclidean distance (see the supplementary material for more details), this leads to the following result.

\begin{proposition}\label{pro:G-VdV_consistency}
    Assume that $0 \in \kls(\Pi)$ and that, for some function\footnote{Valid choices of $\varphi$ include $\varphi(t) = t^{1+\beta}$ and $\varphi(t) = \beta \exp(t)$ for some $\beta > 0$. The latter condition, for example, is satisfied by an exponential prior.} $\varphi : (0, \infty) \to (0,\infty)$ such that $\lim_{t \to \infty} \varphi(t)/t = \infty$, the prior CDF satisfies
    \begin{equation*}
        1 - \Pi(\theta) \leq e^{-\varphi(\ln \theta)}
    \end{equation*}
    for all sufficiently large $\theta > 0$. Then the posterior is consistent at $\theta_\star = 0$.
\end{proposition}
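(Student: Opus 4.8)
The plan is to follow the sieve-and-testing route of \citet{barron1999} and \citet{ghosal1999} outlined just before the statement. Write the posterior mass of the complement $A_\varepsilon^c=[\varepsilon,\infty)$ as the ratio $N_n/D_n$, where $N_n:=\int_{[\varepsilon,\infty)}\prod_{i=1}^n \big(f_\theta(X_i)/f_0(X_i)\big)\,\Pi(\mathrm d\theta)$ and $D_n$ is the same integral over all of $\Theta$; recall $f_0\equiv 1$. Since $0\in\kls(\Pi)$, the standard denominator bound (cf.\ Lemma~4 of \citealt{barron1999}) gives, for every $\beta>0$, $D_n\geq e^{-\beta n}$ eventually a.s.-$F_0^\infty$. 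It then suffices to exhibit a small $\beta>0$ for which $N_n$ decays faster than $e^{-\beta n}$.

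Two model-specific facts drive the argument. First, the densities are uniformly bounded, $C:=\sup_{\theta\geq 0,\,x\in[0,1]}f_\theta(x)<\infty$. Second — and this is what permits the tail densities to be tested against $f_0$ despite $F_\theta\wc F_0$ — the family $\{f_\theta:\theta\geq\varepsilon\}$ is uniformly Hellinger-separated from $f_0$: $\eta:=\inf_{\theta\geq\varepsilon}d_h(f_\theta,f_0)>0$. This holds because $\theta\mapsto d_h(f_\theta,f_0)$ is continuous and strictly positive on $[\varepsilon,\infty)$ and, via the identity $d_h^2(f_\theta,f_0)=2-2\int_0^1\sqrt{f_\theta}$, has a strictly positive limit inferior as $\theta\to\infty$ (the oscillations keep $\int_0^1\sqrt{f_\theta}$ bounded away from $1$). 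Finally, the Hellinger--Euclidean comparison $d_h(f_\theta,f_{\theta'})\lesssim|\theta-\theta'|$ established in the supplement bounds the $(\eta/2)$-covering number of any interval $[\varepsilon,T]$ in $d_h$ by $\lesssim T$.

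Now fix a small $c'>0$, set $T_n:=e^{c'n}$, $P_n:=[\varepsilon,T_n]$, $Q_n:=(T_n,\infty)$, and split $N_n=N_n^{(1)}+N_n^{(2)}$ over $P_n$ and $Q_n$. Cover $P_n$ by $N\lesssim T_n$ Hellinger balls of radius $\eta/2$, build a Le Cam test on each, and take the maximum to obtain a $\{0,1\}$-valued test $\phi_n$ with $\E_{f_0}\phi_n\lesssim T_n e^{-\kappa n}=e^{(c'-\kappa)n}$ and $\sup_{\theta\in P_n}\E_{f_\theta}(1-\phi_n)\leq e^{-\kappa n}$, for some $\kappa=\kappa(\eta)>0$. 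Choosing $c'<\kappa$ makes $\E_{f_0}\phi_n$ summable, so $\phi_n=0$ eventually a.s.\ by Borel--Cantelli; moreover, by Fubini and a change of measure, $\E_{f_0}\big[(1-\phi_n)N_n^{(1)}\big]=\int_{P_n}\E_{f_\theta}(1-\phi_n)\,\Pi(\mathrm d\theta)\leq e^{-\kappa n}$, whence Markov's inequality and Borel--Cantelli give $N_n^{(1)}\leq e^{-\kappa n/2}$ eventually a.s. For the tail, the crude bound $N_n^{(2)}\leq C^n\,\Pi(Q_n)\leq C^n e^{-\varphi(\ln T_n)}=e^{n\ln C-\varphi(c'n)}$, combined with $\varphi(c'n)/n\to\infty$ (which follows from $\varphi(t)/t\to\infty$), shows $N_n^{(2)}$ decays faster than $e^{-\beta n}$ for every $\beta>0$. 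Taking $\beta$ small enough that both $N_n^{(1)}/D_n$ and $N_n^{(2)}/D_n$ vanish yields $\Pi(A_\varepsilon^c\mid X_{1:n})\to 0$ a.s.-$F_0^\infty$, as required.

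The delicate points are the two facts in the second paragraph: the uniform Hellinger separation $\eta>0$ of the tail $\{f_\theta:\theta\geq\varepsilon\}$ from $f_0$ (without which no single exponent $\kappa$ governs tests over the whole sieve), and the covering estimate from the Hellinger--Euclidean comparison. Once these are secured, balancing the constants is comfortable: because the model is finite-dimensional, the sieve entropy is only $O(\log T_n)=O(n)$ with an adjustable small constant, while the prior-tail hypothesis $\varphi(t)/t\to\infty$ is engineered precisely so that even an exponentially large sieve $T_n=e^{c'n}$ retains exponentially negligible prior mass. I therefore expect the uniform separation (the place where property~4 of Proposition~\ref{pro:properties_counterexample} must be turned into a quantitative, $\theta$-uniform bound) to be the main obstacle, with the remaining steps being routine.
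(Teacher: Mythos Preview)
Your proposal is correct and follows the same sieve-and-testing route as the paper, which simply invokes Theorem~6.23 of \cite{ghosal2017fundamentals} with sieve $\Theta_n=[0,\bar\theta_n]$, $\bar\theta_n=\delta e^{n\delta^2}$, using the Lipschitz bound $d_h(f_\theta,f_{\theta'})\leq|\theta-\theta'|$ (Lemma~\ref{lem:hellinger<euclidean}) to control entropy and the tail hypothesis to obtain $\Pi(\Theta_n^c)\leq e^{-cn}$. Two minor remarks: your crude deterministic bound $N_n^{(2)}\leq C^n\Pi(Q_n)$ works here but is unnecessarily wasteful, since Fubini already gives $\E_{f_0}N_n^{(2)}=\Pi(Q_n)$ (this is what the cited theorem uses for the sieve complement); and the uniform separation $\eta=\inf_{\theta\geq\varepsilon}d_h(f_\theta,f_0)>0$ that you flag as the ``main obstacle'' is in fact an immediate consequence of Property~1 of Proposition~\ref{pro:properties_counterexample} (equivalence of the Euclidean and Hellinger metrics on $\mathcal F_\Theta$), so no separate quantitative computation is needed.
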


A second approach is that of \cite{walker2004squarerootsum}, who showed that posterior consistency holds as long as the parameter space can be covered by sets whose prior masses satisfy a suitable summability condition. In our setting, this yields the following result.

\begin{proposition}\label{pro:consistency_walker2004}
    Assume that $0 \in \kls(\Pi)$ and that the prior density $\pi$, for all sufficiently large $\theta > 0$, is decreasing and satisfies
    \begin{equation}\label{eq:tail_condition_sqrt}
        \pi(\theta) \lesssim \frac{1}{\theta^2 (\ln \theta)^{2 + \beta}}
    \end{equation}
    for some $\beta > 0$. Then the posterior is consistent at $\theta_\star = 0$.
\end{proposition}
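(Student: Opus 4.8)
The plan is to verify the hypotheses of the covering criterion of \cite{walker2004squarerootsum}: since $0\in\kls(\Pi)$ is assumed—which, via the standard Schwartz/Barron lower bound (cf.\ \cite{barron1999}, Lemma~4), controls the denominator of the posterior ratio—it remains to exhibit, for each fixed $\delta>0$, a countable cover $\{A_k\}$ of the Hellinger complement $U_\delta^c:=\{\theta\in\Theta : d_h(f_\theta,f_0)>\delta\}$ by sets of Hellinger diameter at most $\delta/2$ with summable root-masses, $\sum_k\sqrt{\Pi(A_k)}<\infty$. I would first localize: by continuity of $\theta\mapsto f_\theta$ in $d_h$ (property~1 of Proposition~\ref{pro:properties_counterexample}) there is $\theta_0>0$ with $d_h(f_\theta,f_0)<\delta$ for all $\theta\in[0,\theta_0)$, so that $U_\delta^c\subseteq[\theta_0,\infty)$ and it suffices to cover the half-line $[\theta_0,\infty)$.

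The crux—and what I expect to be the main obstacle—is to translate Euclidean intervals into sets of \emph{uniformly} controlled Hellinger diameter all the way out to infinity, despite the densities oscillating ever faster as $\theta\to\infty$. Concretely, I would establish the bound $d_h(f_\theta,f_{\theta'})\le C\,\vert\theta-\theta'\vert$ for all $\theta,\theta'\geq\theta_0$, with $C$ independent of $\theta$, by controlling $\sup_x\vert\partial_\theta\sqrt{f_\theta(x)}\vert$. Writing $\sqrt{f_\theta(x)}=\sqrt{1+\cos(\theta x)}\,/\sqrt{1+\sin\theta/\theta}$, the key point is that differentiating in the parameter produces a factor $x\in[0,1]$ rather than one growing with $\theta$, since $\partial_\theta(1+\cos\theta x)=-x\sin(\theta x)$. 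The only apparent danger is the division by $\sqrt{f_\theta}$ near the (increasingly numerous) zeros of $f_\theta$, but this singularity cancels: $\vert x\sin(\theta x)\vert/\sqrt{1+\cos(\theta x)}=x\sqrt{1-\cos(\theta x)}\le\sqrt2$. Taking $\theta_0$ large enough that $1+\sin\theta/\theta$ is bounded away from $0$ (and noting its derivative contributes only an $O(1/\theta)$ term), one gets $\sup_x\vert\partial_\theta\sqrt{f_\theta(x)}\vert\le C$ uniformly for $\theta\geq\theta_0$, whence the uniform Lipschitz estimate follows by integrating in $L^2[0,1]$. In particular, every Euclidean interval of length $\ell:=\delta/(2C)$ inside $[\theta_0,\infty)$ has Hellinger diameter at most $\delta/2$—that is, Hellinger balls retain a fixed Euclidean width at infinity.

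With this in hand, I would take the consecutive intervals $A_k:=[\theta_0+(k-1)\ell,\ \theta_0+k\ell]$, $k\geq1$, as the cover; only finitely many meet any bounded region, so the root-mass sum is governed by the tail. For large $k$, monotonicity of $\pi$ together with the tail condition \eqref{eq:tail_condition_sqrt} gives $\Pi(A_k)\le\ell\,\pi\!\left(\theta_0+(k-1)\ell\right)\lesssim \ell\,\big[(k\ell)^2(\ln k\ell)^{2+\beta}\big]^{-1}$, which is of order $\big[k^2(\ln k)^{2+\beta}\big]^{-1}$, so that $\sqrt{\Pi(A_k)}\lesssim\big[k(\ln k)^{1+\beta/2}\big]^{-1}$. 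Hence $\sum_k\sqrt{\Pi(A_k)}<\infty$ exactly when $1+\beta/2>1$, i.e.\ when $\beta>0$; this reveals the precise role of the hypothesis—the factor $\theta^2$ renders the bare index sum borderline ($\sum_k k^{-1}$), and the logarithmic exponent $2+\beta$ supplies the $(\ln k)^{1+\beta/2}$ needed for convergence. Dropping any pieces that fall entirely within the $\delta$-ball around $f_0$ (the retained ones, having diameter $\le\delta/2$ and meeting $U_\delta^c$, stay at $d_h$-distance $>\delta/2$ from $f_0$), the cover satisfies the requirements of \cite{walker2004squarerootsum}, and its theorem yields posterior consistency at $\theta_\star=0$.
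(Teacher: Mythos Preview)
Your proposal is correct and follows essentially the same route as the paper's proof: both arguments rest on the uniform Lipschitz bound $d_h(f_\theta,f_{\theta'})\le |\theta-\theta'|$ (which the paper records as Lemma~\ref{lem:hellinger<euclidean}, proved via exactly the derivative computation you sketch, with constant $1$ valid on all of $[0,\infty)$), then cover the half-line by equi-spaced Euclidean intervals of fixed width, bound each $\Pi(A_k)$ using monotonicity of $\pi$ together with the tail condition~\eqref{eq:tail_condition_sqrt}, and conclude by the summability of $\sum_k\bigl[k(\ln k)^{1+\beta/2}\bigr]^{-1}$. The only cosmetic differences are that the paper covers $[0,\infty)$ directly with intervals $[2k\delta,2(k+1)\delta]$ rather than first localizing to $[\theta_0,\infty)$, and invokes Lemma~\ref{lem:hellinger<euclidean} instead of re-deriving it.
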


Finally, we consider a third strategy that leverages insights from our analysis of oscillations in the product likelihood, as well as structural properties of the model, to establish consistency under even weaker assumptions on the prior. Specifically, we exploit the observation that the product likelihood, when restricted to values of $\theta$ smaller than $e^{cn}$ (for some $c > 0$), is asymptotically well behaved: in this region, oscillations eventually vanish, and the restricted MLE exhibits regular behavior. Instead, for the region where $\theta > e^{cn}$, any prior with sub-polynomial tails suffices to ensure the necessary posterior mass decay, leading to the following result.

\begin{theorem}\label{thm:our_consistency}
    Assume that $0 \in \kls(\Pi)$ and that the prior density \(\pi\), for all sufficiently large \(\theta > 0\), satisfies  
    \[
    \pi(\theta) \lesssim \theta^{-(1+\alpha)}
    \]  
    for some $\alpha > 0$. Then the posterior is consistent at $\theta_\star = 0$.
\end{theorem}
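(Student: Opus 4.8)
The plan is to work directly with the posterior odds for $A_\varepsilon^c=[\varepsilon,\infty)$. Since $\theta_\star=0$ and $f_0\equiv 1$, we may write $\Pi(A_\varepsilon^c\mid X_{1:n})=N_n/D_n$ with $N_n=\int_\varepsilon^\infty\prod_{i=1}^n f_\theta(X_i)\,\pi(\theta)\,\mathrm d\theta$ and $D_n=\int_0^\infty\prod_{i=1}^n f_\theta(X_i)\,\pi(\theta)\,\mathrm d\theta$. The hypothesis $0\in\kls(\Pi)$ lets me invoke the standard denominator lower bound (Lemma~4 of \cite{barron1999}, already used in Section~\ref{sec:classical_conditions}): for every $\kappa>0$, $D_n\ge e^{-\kappa n}$ eventually a.s.-$F_0^\infty$. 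The heart of the argument is then to split the numerator at a threshold $e^{cn}$, writing $N_n=N_n^{\mathrm{tail}}+N_n^{\mathrm{bulk}}$ for the integrals over $(e^{cn},\infty)$ and over $[\varepsilon,e^{cn}]$, and to show that each decays exponentially faster than $e^{-\kappa n}$ for suitably small $\kappa$. The constant $c>0$ is chosen small, and the two regions are treated by genuinely different tools.

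For the tail I would use a first-moment argument, which is where the prior tail condition enters. Because $f_0\equiv1$ and $\int_0^1 f_\theta=1$, one has $\E_{F_0}\big[\prod_{i=1}^n f_\theta(X_i)\big]=1$ for every $\theta$, so by Fubini $\E_{F_0}[N_n^{\mathrm{tail}}]=\Pi((e^{cn},\infty))$. The hypothesis $\pi(\theta)\lesssim\theta^{-(1+\alpha)}$ gives $\Pi((e^{cn},\infty))\lesssim e^{-\alpha c n}$, and Markov's inequality with Borel--Cantelli then yields $N_n^{\mathrm{tail}}\le e^{-\alpha c n/2}$ eventually a.s. Crucially this works for any $c>0$, so it imposes no lower bound on $c$; this is precisely what lets me avoid the crude likelihood bound $\prod f_\theta\le 2^n$, which would otherwise clash with the small-$c$ requirement below.

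For the bulk I would use the square-root (Hellinger affinity) decomposition $N_n^{\mathrm{bulk}}\le\big(\sup_{\theta\in[\varepsilon,e^{cn}]}\prod_{i=1}^n f_\theta(X_i)^{1/2}\big)\int_\varepsilon^\infty\prod_{i=1}^n f_\theta(X_i)^{1/2}\,\pi(\theta)\,\mathrm d\theta$. For the integral factor, the affinity identity $\int_0^1\sqrt{f_\theta}=1-\tfrac12 d_h^2(f_\theta,f_0)$ gives $\E_{F_0}\big[\prod f_\theta(X_i)^{1/2}\big]\le e^{-\frac n2 d_h^2(f_\theta,f_0)}$; provided the separation is uniform, $\delta_0^2:=\inf_{\theta\ge\varepsilon}d_h^2(f_\theta,f_0)>0$, a further Markov/Borel--Cantelli step bounds the integral by $e^{-n\delta_0^2/4}$ eventually a.s. The uniform separation follows from $d_h^2(f_\theta,f_0)=2\big(1-\int_0^1\sqrt{f_\theta}\big)$ together with $\int_0^1\sqrt{f_\theta}\to 2\sqrt2/\pi<1$ as $\theta\to\infty$ (cf.\ property~4 of Proposition~\ref{pro:properties_counterexample}) and strict positivity on compacts. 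The delicate factor is the restricted supremum $R:=\sup_{\theta\in[\varepsilon,e^{cn}]}\frac1n\sum_i\ln f_\theta(X_i)$, which is exactly a restricted MLE quantity: by Theorem~\ref{thm:MLE_inconsistency} its unrestricted version reaches $\tfrac12\ln 2$, so a global bound fails. My plan is to control $R$ by a grid/union bound whose linchpin is that $\theta\mapsto f_\theta(x)$ is Lipschitz with a constant independent of $x\in[0,1]$ and of $\theta$ (for large $\theta$, $|\partial_\theta f_\theta(x)|$ reduces to roughly $|x\sin(\theta x)|\le1$), so $[\varepsilon,e^{cn}]$ is covered by $O(e^{cn})$ intervals of \emph{fixed} mesh. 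On each grid point Hoeffding's inequality controls $\frac1n\sum_i f_\theta(X_i)$ around its mean $1$, the union bound over the $O(e^{cn})$ points is summable exactly when $c$ is small enough, and passing from $\sum f_\theta(X_i)$ to $\sum\ln f_\theta(X_i)$ via $\ln t\le t-1$ gives $R\le\delta_0^2/4$ eventually a.s. The two factors combine to $N_n^{\mathrm{bulk}}\le e^{-n\delta_0^2/8}$.

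Finally I would fix $c>0$ small enough for the bulk union bound and then take $\kappa<\min(\alpha c/2,\delta_0^2/8)$; both $N_n^{\mathrm{tail}}/D_n$ and $N_n^{\mathrm{bulk}}/D_n$ then tend to $0$ a.s., giving $\Pi(A_\varepsilon^c\mid X_{1:n})\to0$ for every $\varepsilon>0$, i.e.\ consistency at $\theta_\star=0$. I expect the main obstacle to be the bulk supremum $R$: one must certify that, below the threshold $e^{cn}$, the oscillations cannot conspire to push the average log-likelihood above the separation budget $\delta_0^2/4$, and the fact that makes this tractable with a \emph{positive} $c$ is the uniform-in-$\theta$ Lipschitz bound, which keeps the covering number at $O(e^{cn})$ with fixed mesh rather than a mesh that must shrink with $n$.
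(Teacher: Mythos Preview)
Your proposal is correct and follows essentially the same route as the paper: split at $e^{cn}$, handle the tail $(e^{cn},\infty)$ via the first-moment identity $\E_{F_0}\bigl[\prod_i f_\theta(X_i)\bigr]=1$ together with the prior tail bound, and handle the bulk $[\varepsilon,e^{cn}]$ via the square-root decomposition plus a covering/union-bound control of the restricted supremum exploiting the uniform Lipschitz continuity of $\theta\mapsto f_\theta(x)$. The only cosmetic difference is that the paper bounds the grid points directly via Markov on $\prod_i\bigl(f_\theta^{1/2}(X_i)+\delta\bigr)$ using the Hellinger affinity (obtaining a decaying bound $e^{-nb/2}$ on the supremum factor), whereas you use Hoeffding on $\sum_i f_\theta(X_i)$ together with $\ln t\le t-1$ (obtaining a merely slowly-growing bound that is then absorbed by the integral factor); both variants work.
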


\section{Discussion}\label{sec:discussion}

We introduced a general framework for studying posterior consistency in parametric models, centered on the simple yet powerful notion of sequential identifiability. Departing from the classical approach that links posterior consistency to the regular behavior of maximum likelihood estimation, our perspective builds on the foundational result of \citet{schwartz1964consistency}, which guarantees weak consistency under a mild Kullback-Leibler support condition. Sequential identifiability then arises as the minimal additional assumption needed to lift weak consistency to consistency at the level of parameters. We also showed that inconsistency can only occur when the model admits self-inflicted pathological oscillatory behavior around the true density. This insight, along with the identifiability criterion itself, allows us to establish consistency under assumptions significantly less restrictive than those required by classical regularity-based theories, which were primarily designed to prevent MLE overfitting as a consequence of likelihood peaks at the data points. Our framework thus opens the door to analyzing models that fall outside the scope of existing results, including the parametric example we constructed and examined in detail.

Our work has a multiplicity of ramifications. Most directly, it offers novel and accessible guidance for the applied Bayesian statistician seeking to understand the asymptotic behavior of parametric models of interest. Building on our analysis of sequential identifiability, our message to the modeler is straightforward: rather than verifying a list of regularity conditions, simply avoid introducing arbitrarily oscillating densities into the model in the first place. Even when such oscillations are unavoidable, parametric inconsistency remains unlikely unless the modeler is able to design oscillations that align with the unknown data-generating process. However, in that scenario, statistical inference under a parametric model becomes questionable. Either the modeler is confident in using a finite-dimensional family and yet somehow knows the true distribution well enough to adversarially target it with pathological oscillations, making inference hardly necessary; or the modeling goal demands such flexibility that sequential unidentifiability arises over a broad subset of the parameter space, in which case a nonparametric approach may be more suitable. Our illustrative example required exactly this kind of contrived construction to potentially induce inconsistency, and even then, only at a single, sequentially unidentifiable parameter value. Remarkably, we further showed that consistency can still be recovered at that value through a detailed analysis of the model’s oscillatory behavior and appropriate control via the prior. This was done by means of techniques inspired by the study of nonparametric consistency, where this kind of extreme oscillatory behavior naturally belongs.

Second, as we have discussed in detail, our analysis bears strong connections with the literature on nonparametric posterior consistency. Specifically, by recognizing some fundamental differences between parametric and nonparametric models, we have arrived at sequential identifiability as a key condition for parametric consistency, while recognizing its inappropriateness in nonparametric settings. Nevertheless, our analysis on oscillations, motivated by the need to understand the implications of the failure of sequential identifiability, is intimately tied with the most common approaches to nonparametric consistency. Although not explicitly, the sieve complexity conditions introduced by \cite{barron1999}, \cite{ghosal1999}, and \cite{walker2004squarerootsum} aim to address the same underlying issue: in those works, oscillatory behavior in the density model is quantified and controlled, for instance, via the Hellinger metric entropy of sets where most of the prior mass is concentrated. This is analogous to our calculations in Section~\ref{sec:counterexample}, where we established posterior consistency at the only parameter value for which our illustrative model is not sequential identifiable.

Finally, our analysis—particularly the construction of a model based on cosine oscillations—sheds new light on a well-known counterexample by \cite{barron1999}, in which the posterior is weakly consistent but not consistent in Hellinger distance. In that example, positive prior mass is placed on each member of a family of discontinuous, oscillatory densities on $[0,1]$ that alternate between values of 0 and 2 across disjoint intervals, thereby weakly approximating the uniform density. In our terminology, this corresponds to a model that is sequentially unidentifiable at the uniform density, and the mechanism driving inconsistency is closely related to our example, in which continuous oscillations occur between 0 and (approximately) 2 around the uniform density. Nevertheless, our model has been shown to achieve consistency at the uniform distribution under mild tail conditions on the prior, while the carefully placed prior mass on discontinuous, oscillatory densities in the example of \cite{barron1999} results in inconsistency. Although beyond the scope of this study, these observations suggest the possibility that additional mild assumptions on the nature of oscillations (e.g., continuity) or on the prior (e.g., no positive mass on single parameters or densities) may help to rule out inconsistent posterior behavior even in nonparametric settings.

To summarize our findings, for a parametric model to be inconsistent, not only must a distribution \( F_\infty \) exist at the weak boundary of the model---say, as the parameter diverges to infinity---but this distribution must also coincide with the one from which the sample is generated. As we have shown, this alignment can only result from an implausible and adversarial model design involving oscillations carefully tailored around the true, unknown density. Even in such cases, posterior inconsistency does not arise unless the prior assigns sufficient mass to these oscillatory features. For instance, in a finite Gaussian mixture model with \( K \) components, the boundary distributions \( F_\infty \) corresponds to discrete measures with at most \( K \) atoms, which can be ruled out as plausible true distributions because they do not admit a density. In our cosine-based example, where the oscillations are deliberately engineered so that \( F_0 = F_\infty \) correspond to the uniform distribution, we showed that inconsistency at $\theta=0$ is still ruled out even under priors with heavier-than-Cauchy tail, due to insufficient mass being placed on the problematic region. In short, parametric models, unlike nonparametric ones, may be regarded as universally consistent unless one adopts a highly artificial construction that simultaneously entangles the true distribution, the density model, and the prior.


\clearpage
\appendix

\begin{center}
{\bf{\Large{Supplementary Material for ``Posterior Consistency in Parametric Models via a Tighter Notion of Identifiability''}}}
\end{center}

In this supplementary material, we present the proofs of all the theoretical results from the main text of the article.

\subsection*{Proof of Theorem~\ref{thm:general_parametric_consistency}} %

By Theorem~\ref{thm:schwartz}, the KL support assumption implies that, for all $\varepsilon>0$,
\begin{equation*}
    \Pi(\{\theta\in\Theta : d_w(F_{\theta_\star}, F_\theta) \leq  \varepsilon\} \mid X_{1:n}) \to 1
\end{equation*}
a.s.-$F_{\theta_\star}^\infty$. Specifically, choosing any positive decreasing sequence $\varepsilon_j\to 0$,
\begin{equation*}
    F_{\theta_\star}^\infty\left(x_{1:\infty}\in\RR^\NN:\, \Pi(\{\theta\in\Theta : d_w(F_{\theta_\star}, F_\theta) \leq  \varepsilon_j\} \mid x_{1:n}) > 1-\delta \textnormal{ ultimately}\right) = 1.
\end{equation*}
for all $\delta>0$ and $j\in\NN$. Now fix $j\in\NN$ and assume per contra that the posterior is not consistent in the Euclidean metric, so that there exist $\varepsilon>0$ and $\delta>0$ such that
\begin{equation*}
    F_{\theta_\star}^\infty\left(x_{1:\infty} \in\RR^\NN:\,\Pi(\{\theta\in\Theta : \Vert\theta_\star- \theta\Vert >  \varepsilon\} \mid x_{1:n}) > \delta \textnormal{ i.o.}\right) >0.
\end{equation*}
Because $F_{\theta_\star}^\infty$ is a probability measure, the two above expressions imply that there exists some $x_{1:\infty}$ such that
\begin{align*}
    \Pi(\{\theta\in\Theta : d_w(F_{\theta_\star}, F_\theta) \leq  \varepsilon_j\} \mid x_{1:n}) & > 1-\delta \quad \textnormal{ultimately}, \\
    \Pi(\{\theta\in\Theta : \Vert\theta_\star- \theta\Vert >  \varepsilon\} \mid x_{1:n}) &> \delta \quad \textnormal{ i.o.}
\end{align*}
This implies the existence of some $n\in\NN$ such that
\begin{align*}
    \Pi(\{\theta\in\Theta : d_w(F_{\theta_\star}, F_\theta) \leq  \varepsilon_j\} \mid x_{1:n}) & > 1-\delta, \\
    \Pi(\{\theta\in\Theta : \Vert\theta_\star- \theta\Vert >  \varepsilon\} \mid x_{1:n}) & > \delta.
\end{align*}
Because $\Pi(\cdot\mid x_{1:n})$ is a probability measure, the last two inequalities imply the existence of some $\theta_j\in\Theta$ such that both $d_w(F_{\theta_\star}, F_{\theta_j}) \leq  \varepsilon_j$ and $\Vert\theta_\star- \theta_j\Vert >  \varepsilon$. Because we can repeat the above argument for all $j\in\NN$, we can construct a sequence $(\theta_j)_{j\in\NN}$ such that $d_w(F_{\theta_\star}, F_{\theta_j}) \leq  \varepsilon_j\to 0$ but $\inf_{j\in\NN}\Vert\theta_\star- \theta_j\Vert \geq  \varepsilon>0$. This leads to a contradiction of sequential identifiability at $\theta_\star$, concluding the proof of posterior consistency.

As for the consistency of $\hat f_n$, Jensen's inequality applied to the convex map $f \mapsto d_h(f, f_{\theta_\star})$, together with the definition $A_\varepsilon := \{\theta \in \Theta : d_h(f_\theta, f_{\theta_\star}) < \varepsilon\}$ for $\varepsilon > 0$, implies
\begin{align*}
    d_h(\hat f_n, f_{\theta_\star}) & \leq \int_\Theta d_h(f_\theta, f_{\theta_\star})\,\Pi(\mathrm d\theta \mid X_{1:n}) \\
    & = \int_{A_\varepsilon} d_h(f_\theta, f_{\theta_\star})\,\Pi(\mathrm d\theta \mid X_{1:n}) + \int_{A_\varepsilon^c} d_h(f_\theta, f_{\theta_\star})\,\Pi(\mathrm d\theta \mid X_{1:n}) \\
    & \leq \varepsilon + \sqrt{2}\, \Pi(A_\varepsilon^c \mid X_{1:n}).
\end{align*}
The second term converges to zero a.s-$F_{\theta_\star}^\infty$ for any $\varepsilon > 0$, by posterior consistency, while the first term can be made arbitrarily small. This completes the proof.

\subsection*{Proof of Theorem~\ref{thm:oscillations}} %

For convenience, recall the definition of the Lévy-Prokhorov distance between distributions $F_j$ and $G$:
\begin{equation*}
    d_w(F_j, G) := \inf\left\{\delta>0:\, \forall A\in\mathscr B(\RR),\, F_j(A)\leq G(A^\delta) +\delta, \, G(A)\leq F_j(A^\delta)+\delta\right\},
\end{equation*}
where $A^\delta := \{x\in\RR : \exists y\in A, \, \vert x-y\vert < \delta\}$. Therefore, calling $\delta_j:= d_w(F_j, G)$, by assumption we get\footnote{Notice that, if the infimum in the definition of $d_w$ is not attained by $\delta_j$, it suffices to replace $\delta_j$ with $\delta_j + \varepsilon_j$, for some non-negative sequence $\varepsilon_j \to 0$, in the following analysis. Therefore, without loss of generality, we continue to work with $\delta_j$.}
\begin{equation}\label{eq:LP_metric condition}
    F_j(A) \leq G(A^{\delta_j}) + \delta_j \textnormal{ and } G(A) \leq F_j(A^{\delta_j}) + \delta_j, \qquad \forall A\in\mathscr B(\RR).
\end{equation}
Now recall that, for probability measures on $\RR$ admitting a density, the Hellinger distance is topologically equivalent to the total variation distance $d_{TV}$, so assume without loss of generality that
\begin{equation*}
    \varepsilon \leq d_{TV}(F_j, G) := \sup_{A\in\mathscr B(\RR)} \vert F_j(A) - G(A)\vert \equiv \frac{1}{2}\int_\RR \vert f_j(x) - g(x) \vert \mathrm dx.
\end{equation*}
The above characterization of $d_{TV}$ implies that the $\sup$ in its definition is attained either by $A_j$ at $F_j(A_j) - G(A_j)\geq\varepsilon$, or by $B_j$ at $G(B_j) - F_j(B_j)\equiv F_j(A_j) - G(A_j) \geq\varepsilon$. Without loss of generality, assume that the $\sup$ is attained by $A_j$ and
\begin{equation}\label{eq:TV_condition}
    F_j(A_j) - G(A_j)\geq\varepsilon,
\end{equation}
the other case being perfectly symmetric. Denote by
\begin{equation*}
    A_j = \bigcup_{i=1}^{O_j} (a_{ij}, b_{ij}),
\end{equation*}
the minimal decomposition of $A_j$, which exists by the assumed openness of $A_j$. Then Equations~\eqref{eq:LP_metric condition} and \eqref{eq:TV_condition} combine into $   \varepsilon \leq G(A_j^{\delta_j}\setminus A_j) + \delta_j$, where
\begin{equation*}
    A_j^{\delta_j}\setminus A_j \subseteq \bigcup_{i=1}^{O_j}\Big([a_{ij} - \delta_j, a_{ij}]\cup [b_{ij}, b_{ij} + \delta_j]\Big)
\end{equation*}
is such that
\begin{equation*}
    G(A_j^{\delta_j}\setminus A_j) \leq 2 O_j\sup_{x\in\RR}\left\{G(x+\delta_j) - G(x)\right\}.
\end{equation*}
Therefore
\begin{equation*}
    \varepsilon\leq 2O_j\sup_{x\in\RR}\left\{G(x+\delta_j) - G(x)\right\} + \delta_j
\end{equation*}
for all $j\in\NN$. Because the distribution $G$ is a continuous, bounded and monotonically increasing function, it is also uniformly continuous, so $\lim_{j\to\infty} \delta_j = 0$ implies $\lim_{j\to\infty}\sup_{x\in\RR}\left\{G(x+\delta_j) - G(x)\right\} = 0$. This, together with the last expression, yields $\lim_{j\to\infty} O_j = \infty$.

\subsection*{Proof of Proposition~\ref{pro:properties_counterexample}}%

We prove that each property holds separately.

\subsubsection*{Proof of property 1}

To prove the equivalence of the Euclidean and Hellinger metrics, we rely on the following lemma.

\begin{lemma}\label{lem:hellinger<euclidean}
    For all $\theta, \theta' \geq 0$, $d_h(f_\theta, f_{\theta'}) \leq \min\left\{\sqrt 2,\, |\theta - \theta'|\right\}$.
\end{lemma}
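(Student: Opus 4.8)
The plan is to identify the Hellinger distance with the $L^2$ distance between square-root densities and to bound the latter by controlling how fast $\sqrt{f_\theta}$ moves as $\theta$ varies. Writing $g_\theta := \sqrt{f_\theta}$, we have $d_h(f_\theta, f_{\theta'}) = \|g_\theta - g_{\theta'}\|_{L^2([0,1])}$, and the crude bound $d_h \le \sqrt{2}$ is immediate from $d_h(f_\theta,f_{\theta'})^2 = 2 - 2\int_0^1 \sqrt{f_\theta f_{\theta'}}\,\mathrm dx \le 2$, since the overlap integral is nonnegative. The substance of the lemma is therefore the Lipschitz bound $d_h(f_\theta, f_{\theta'}) \le |\theta - \theta'|$.

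To obtain it, I would fix $x \in [0,1]$ and view $s \mapsto g_s(x) = \sqrt{2}\,|\cos(sx/2)|/\sqrt{Z(s)}$, with $Z(s) := 1 + \sin(s)/s$, as a function of $s$. This map is absolutely continuous (the factor $|\cos(sx/2)|$ is Lipschitz in $s$ with constant $x/2 \le 1/2$, while $1/\sqrt{Z(s)}$ is smooth and bounded once $Z$ is shown to be bounded away from $0$), so the fundamental theorem of calculus gives, assuming WLOG $\theta' \le \theta$, the identity $g_\theta(x) - g_{\theta'}(x) = \int_{\theta'}^{\theta} \partial_s g_s(x)\,\mathrm ds$. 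Squaring, integrating over $x \in [0,1]$, and applying Minkowski's integral inequality (exchanging the roles of $x \in [0,1]$ and $s \in [\theta',\theta]$) yields $d_h(f_\theta, f_{\theta'}) \le \int_{\theta'}^{\theta} \|\partial_s g_s\|_{L^2([0,1])}\,\mathrm ds$. Thus it suffices to establish the uniform bound $\|\partial_s g_s\|_{L^2([0,1])} \le 1$ for every $s > 0$.

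The final and main step is this uniform bound, which I would phrase through the Fisher information $I(s) := \int_0^1 (\partial_s \ln f_s)^2 f_s\,\mathrm dx$, since $(\partial_s g_s)^2 = (\partial_s f_s)^2/(4 f_s) = \tfrac14 f_s(\partial_s \ln f_s)^2$ gives $\|\partial_s g_s\|_{L^2([0,1])}^2 = \tfrac14 I(s)$; the target becomes $I(s) \le 4$. Writing $\partial_s \ln f_s(x) = u(x) - Z'(s)/Z(s)$ with $u(x) := -x\sin(sx)/(1+\cos(sx))$, and noting that $\E_{f_s}[\partial_s \ln f_s] = 0$ forces the constant $Z'(s)/Z(s)$ to equal $\E_{f_s}[u]$, we get $I(s) = \Var_{f_s}(u) \le \E_{f_s}[u^2]$. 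The key simplification is the identity $u(x)^2(1+\cos(sx)) = x^2\tan^2(sx/2)(1+\cos(sx)) = x^2(1-\cos(sx))$, coming from $\sin(sx)/(1+\cos(sx)) = \tan(sx/2)$ and $1+\cos(sx) = 2\cos^2(sx/2)$, which collapses the weighted integral to $\E_{f_s}[u^2] = Z(s)^{-1}\int_0^1 x^2(1-\cos(sx))\,\mathrm dx \le 2/(3Z(s))$. It then remains to bound $Z(s)$ from below: $\sin(s)/s > 0$ on $(0,\pi)$ gives $Z > 1$ there, while $\sin(s)/s \ge -1/s > -1/\pi$ for $s \ge \pi$ gives $Z > 1 - 1/\pi$; hence $Z(s) > 1/6$ and $I(s) \le 2/(3Z(s)) < 4$ for all $s > 0$, which closes the argument (with substantial room to spare).

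The step I expect to be most delicate is the analytic justification of the termwise differentiation: that $s \mapsto \sqrt{f_s(x)}$ is genuinely absolutely continuous and that Minkowski's inequality applies, despite the square-root density having kinks exactly where the density vanishes (the isolated points where $\cos(sx/2) = 0$). This is handled by observing that $|\cos(sx/2)|$ is Lipschitz in $s$, so the fundamental theorem of calculus holds pointwise, and that the integrand $(\partial_s g_s(x))^2 = (\partial_s f_s)^2/(4f_s)$ remains integrable near those zeros—precisely the content of the finiteness of $I(s)$ established above—so no mass is lost in the interchange of integrals. Everything else reduces to routine trigonometric manipulation and elementary estimates.
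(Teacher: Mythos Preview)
Your proof is correct and follows the same skeleton as the paper's: the $\sqrt{2}$ bound is trivial, and the Lipschitz bound comes from the absolute continuity of $\theta\mapsto\sqrt{f_\theta(x)}$ together with a bound on its $\theta$-derivative. The difference is in how that derivative is controlled. The paper asserts (and leaves to the reader) the \emph{pointwise} bound $|\partial_\theta\sqrt{f_\theta(x)}|\le 1$ for all $x$, which immediately gives the $L^2$ Lipschitz estimate. You instead go through Minkowski's integral inequality and only need the \emph{$L^2$} bound $\|\partial_s g_s\|_{L^2}\le 1$, which you recast as a Fisher-information inequality $I(s)\le 4$ and then verify by a clean half-angle collapse $u(x)^2(1+\cos(sx))=x^2(1-\cos(sx))$ together with the elementary lower bound $Z(s)\ge 1-1/\pi$. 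Your route is slightly longer but arguably more transparent: the Fisher-information identity explains \emph{why} the singularities of $u$ at the zeros of $f_s$ are harmless, and the final numerical check is explicit rather than asserted. The paper's pointwise bound, by contrast, requires estimating each term of the displayed derivative separately, which it does not spell out.
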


\begin{proof}
    $d_h(f_\theta, f_{\theta'}) \leq \sqrt 2$ holds by design, while checking the other upper-bound is straightforward once one verifies that the family of functions $\{\theta \mapsto \sqrt{f_\theta(x)} : x \in [0,1]\}$ is uniformly 1-Lipschitz continuous. To get this, it suffices to check that
    \begin{equation*}
        \left|\frac{\partial}{\partial \theta}\sqrt{f_\theta(x)}\right|=\left|\frac{x\sin(\theta x)}{2\sqrt{1 + \cos(\theta x)}\sqrt{1 + \sin (\theta)/\theta}} + \frac{\sqrt{1 + \cos(\theta x)}(\theta\cos(\theta) -\sin (\theta))}{2\theta^2(1 + \sin(\theta)/\theta)^{3/2}}\right|,
    \end{equation*}
    which exists almost everywhere for all $x \in [0,1]$, is bounded above by 1 for all $x \in [0,1]$ and all $\theta \geq 0$ at which it exists, and moreover that $\theta\mapsto \sqrt{f_\theta(x)}$ is absolutely continuous on $[0,\infty)$ for all $x\in[0,1]$. Plugging this into the definition of $d_h$ yields the desired result.
\end{proof}

In particular, the preceding lemma shows that $d_h(f_\theta, f_{\theta'}) \to 0$ if $|\theta - \theta'| \to 0$. The reverse implication follows from the fact that, for $\theta>0$, Hellinger convergence to $f_\theta$ implies weak convergence to $F_\theta(x) = (x + \sin(\theta x)/\theta)/(1 + \sin(\theta)/\theta)$, which can only happen if the parameter value converges to $\theta$ itself. As for $\theta = 0$, instead, property 4 shows that weak convergence to $F_0$ can only happen as parameter values converge to 0 or as they diverge to $\infty$. However, in the latter case, Hellinger convergence fails (see again property 4), proving that $d_h(f_\theta, f_0)\to 0$ implies $\theta\to 0$.

\subsubsection*{Proof of property 2}

Our aim is to show that $\lim_{\theta'\to \theta}\kl(f_\theta, f_{\theta'}) =0$ for all $\theta$. If that is the case, for all $\varepsilon>0$ small enough there exists $\delta>0$ so that $|\theta'-\theta|<\delta$ implies $\kl(f_\theta, f_{\theta'})<\varepsilon$, or
    $$\{\theta'\geq 0 : |\theta'-\theta|<\delta\}\subseteq \{\theta'\geq 0 : \kl(f_\theta, f_{\theta'})<\varepsilon\}.$$
Because the smaller set has positive prior mass due to our assumption on the prior $\Pi$, one concludes that $\theta\in\kls(\Pi)$.

So fix $\theta,\theta'\geq 0$ and $x\in[0,1]$. Because $1+\sin(t)/t\in[\ell, 2]$ for all $t\geq 0$ and some $\ell>0$, we have that
\begin{align*}
    \kl(f_{\theta}, f_{\theta'}) & =\int_0^1\ln\left(\frac{f_\theta(x)}{f_{\theta'}(x)} \right) f_\theta(x) \,\mathrm dx \\
    & \leq \ln\left(\frac{1+\sin(\theta')/\theta'}{1+\sin(\theta)/\theta}\right) + \frac{1}{\ell}\int_0^1 \ln\left(\frac{1+\cos(\theta x)}{1+\cos(\theta' x)}\right)(1+\cos(\theta x)) \, \mathrm dx .
\end{align*}
By the continuity and boundedness away from $0$ and $\infty$ of the function $\theta \mapsto 1+\sin(t)/t$, the first term converges to $0$ as $\theta' \to \theta$. As for the second addendum, notice that\footnote{Here, we proceed under the assumption that $\theta>0$, the case $\theta=0$ being easily handled thanks to the strict positivity of densities with parameter lying in a neighborhood of 0.} 
\begin{align*}
    \int_0^1 \ln\left(\frac{1+\cos(\theta x)}{1+\cos(\theta' x)}\right)(1+\cos(\theta x)) \, \mathrm dx = \frac{1}{\theta} \int_0^\theta \ln\left(\frac{1+\cos(s)}{1+\cos(\theta's/\theta)}\right)(1+\cos(s))\,\mathrm ds.
\end{align*}
Now fix $s\in[0,\theta]$ and obtain
\begin{align*}
    g(\theta') & := \ln\left(\frac{1+\cos(s)}{1+\cos(\theta's/\theta)}\right)(1+\cos(s)) \\
    & = g(\theta) + g'(\theta)(\theta'-\theta) + \frac{g''(\phi)}{2} (\theta'-\theta)^2 \\
    & \leq g(\theta) + |g'(\theta)||\theta'-\theta| + \frac{|g''(\phi)| }{2} (\theta'-\theta)^2
\end{align*}
by a second order Taylor expansion with remainder around $\theta'=\theta$, where $\phi$ lies between $\theta$ and $\theta'$. Clearly $g(\theta)=0$ and
\begin{equation*}
    |g'(\theta)|= \left|\frac{s\sin(\theta s/\theta)(1+\cos(s))}{\theta(1+\cos(\theta s/\theta))}\right| = \left|\frac{s\sin(s)}{\theta}\right| \leq 1.
\end{equation*}
Moreover
\begin{align*}
    |g''(\phi)| & = \left|\frac{\cos(\phi s/\theta)}{1 + \cos(\phi s/\theta)} + \left(\frac{\sin(\phi s/\theta)}{1+\cos(\phi s/\theta)}\right)^2 \right| (1+\cos(s))\left(\frac{s}{\theta}\right)^2 \\
    & \leq 2\left[\left|\frac{\cos(\phi s/\theta)}{1 + \cos(\phi s/\theta)}\right| + \left(\frac{\sin(\phi s/\theta)}{1+\cos(\phi s/\theta)}\right)^2 \right]
\end{align*}
where it is easily shown that, for $\phi$ sufficiently close to $\theta$ (hence $\theta'$ sufficiently close to $\theta$), the two addenda in square brackets are upper-bounded by finite constants, uniformly over all $s\in[0,\theta]$. Therefore, we conclude that, for $\theta'$ sufficiently close to $\theta$,
\begin{align*}
    \frac{1}{\theta} \int_0^\theta \ln\left(\frac{1+\cos(s)}{1+\cos(\theta's/\theta)}\right)(1+\cos(s))\,\mathrm ds & \lesssim |\theta - \theta'| + (\theta - \theta')^2 \leq 2|\theta - \theta'|
\end{align*}
where the constant in the inequality may depend on $\theta$. This completes the proof.

\subsubsection*{Proof of property 3}
To prove sequential identifiability at all $\theta>0$, it is enough to observe that the model is identifiable and, as $\theta'\to\infty$, $F_{\theta'}$ does not converge weakly to $F_\theta$.

\subsubsection*{Proof of property 4}
To prove that as $\theta\to\infty$, $F_{\theta}\wc F_0$, it is enough to observe that
$$\lim_{\theta\to\infty} F_{\theta}(x) = \lim_{\theta\to\infty}\frac{x + \sin(\theta x)/\theta}{1 + \sin(\theta)/\theta} = x = F_0(x)$$
for all $x\in[0,1]$. To show instead that there exists no Hellinger limit as $\theta\to\infty$, we proceed as follows. By what we just showed, if the Hellinger limit of $f_\theta$ as $\theta\to\infty$ existed, it would be $f_0(x)\equiv 1_{[0,1]}(x)$ (because Hellinger convergence implies weak convergence and weak limits are unique). That is, recalling that Hellinger and $L^1$ convergence are equivalent, we would have
\begin{equation*}
    0 = \lim_{\theta\to\infty} \int_0^1 |f_\theta(x) - 1| \mathrm dx = \int_0^1 \lim_{\theta\to\infty}|f_\theta(x) - 1| \mathrm dx,
\end{equation*}
where the last equality comes from an application of the dominated convergence theorem to the bounded integrand $|f_\theta(x) - 1|$. Therefore, $\lim_{\theta\to\infty} f_\theta(x)$ would exist for almost every $x\in[0,1]$, which we next show not to be the case. Let $\theta_k=2\pi k$ for all $k\in\NN$ and fix $x\in[0,1]\setminus\QQ$. Because $\sin(2\pi k) = 0$ for all $k\in\NN$, we can write $f_{\theta_k}(x) = 1+\cos(2\pi k x) = 1 +\cos(2\pi \{kx\})$, where we denote by $\{r\}:= r - \lfloor r\rfloor$ the fractional part of $r\geq 0$. It is a well-known fact that, for irrational $x$, the set $\{\{kx\} : k\in\NN\}$ is dense in $[0,1]$, and because continuous functions map dense sets to dense sets, $\{1 + \cos(2\pi\{kx\}) : k\in\NN\}$ is dense in $[0,2]$. Therefore $\lim_{k\to\infty} f_{\theta_k}(x)$ does not exist for any $x\in [0,1]\setminus \QQ$, which is a set of Lebesgue measure 1. This leads to a contradiction and concludes the proof.

\subsection*{Proof of Proposition~\ref{pro:failure_walker}}%

Assumption~\Wref{walker_ass1} requires the set $\mathcal O_\theta := \{x\in[0,1] : f_\theta(x) = 0\}$ to be the same for all $\theta\geq 0$, which is clearly not true because
\begin{equation*}
    \mathcal O_\theta = \{x\in [0,1] : \exists k \in \NN_0, \,\theta x = (2k + 1)\pi\},
\end{equation*}
which depends on $\theta$.

Assumption~\Wref{walker_ass3} requires that, for any $\theta_\star\geq 0$ and sufficiently large $M>0$, there exists a function $K_M(x, \theta_\star)$ such that
\begin{equation*}
    \ln f_\theta(x) - \ln f_{\theta_\star}(x) < K_M (x, \theta_\star)
\end{equation*}
for all $\theta > M$, with
\begin{equation*}
    \lim_{M\to \infty}\int_0^1 K_M(x, \theta_\star) \, f_{\theta_\star}(x) \,\mathrm dx < 0.
\end{equation*}
From the proof of Theorem~\ref{thm:MLE_inconsistency} (see the next section), we see that, for all $x\in [0,1]\setminus \mathcal O_{\theta_\star}$, $\delta>0$ and $M>0$, there exist infinitely many $\theta>M$ such that $\ln f_\theta(x)\geq\ln(2-\delta)$, so that any candidate $K_M(x, \theta_\star)$ must satisfy
\begin{equation*}
    K_M(x, \theta_\star) \ge \ln(2-\delta) - \ln f_{\theta_\star}(x).
\end{equation*}
From the proof of Lemma~\ref{lem:upperbound_prod_likelihood} below, it emerges that $\int_0^1f_{\theta_\star}^2(x)\mathrm dx < 2$, so
\begin{align*}
    \int_0^1 f_{\theta_\star}(x)\ln\left(\frac{1}{2} f_{\theta_\star}(x)\right)\mathrm dx & \leq \int_0^1 f_{\theta_\star}(x)\left(\frac{1}{2} f_{\theta_\star}(x)-1\right)\mathrm dx \\
    & = \frac{1}{2}\int_0^1f_{\theta_\star}^2(x) - 1 \\
    & < 0,
\end{align*}
for all $\theta_\star\geq 0$, or equivalently $\int_0^1 f_{\theta_\star}(x)\ln f_{\theta_\star}(x) \, \mathrm dx<\ln 2$. Therefore, choosing $\delta>0$ small enough, we obtain
\begin{equation*}
    \int_0^1 K_M(x, \theta_\star) f_{\theta_\star}(x)\, \mathrm dx \geq \ln(2-\delta) - c_{\theta_\star} >0
\end{equation*}
for all $M>0$, a violation of assumption~\Wref{walker_ass3}.

\subsection*{Proof of Theorem~\ref{thm:MLE_inconsistency}} 

For all true data-generating parameters $\theta_\star\geq 0$, we prove that the MLE $\hat\theta_n$, if it exists, diverges to infinity a.s.-$F_{\theta_\star}^{\infty}$ in two steps as follows:
\begin{enumerate}
    \item Fix a set of $n$ distinct numbers $\{x_1, \dots, x_n\}\subset [0,1]$. We prove that, for any arbitrarily small $\delta>0$, there exists $\theta_\delta\geq 0$ such that
    \[
    \frac{1+\cos(\theta_\delta x_i)}{1+\frac{\sin\theta_\delta}{\theta_\delta}} \ge 2-\delta \quad \text{for all } i = 1,\dots, n.
    \]
    This immediately implies that, for all $n\in\mathbb N$, if $\hat\theta_n$ exists, then
    \begin{align*}
        \forall \delta>0, & \quad \prod_{i=1}^n f_{\hat\theta_n}(x_i) \geq \prod_{i=1}^n f_{\theta_\delta}(x_i) \geq (2-\delta)^n \\
        \implies & \quad \prod_{i=1}^n f_{\hat\theta_n}(x_i) \geq 2^n
    \end{align*} \label{step1}

    \item We then go back to the probabilistic setting where $X_1, \dots, X_n\iid f_{\theta_\star}$. For all $M>0$ and $\theta_\star\geq 0$, we show that
    \begin{equation*}
        F_{\theta_\star}^\infty\left(x_{1:\infty} \in[0,1]^\NN :  \max_{\theta\in[0,M]}\prod_{i=1}^n f_{\theta}(x_i) < 2^n \quad \textnormal{ultimately}\right) = 1
    \end{equation*}
    By step 1, this proves that the MLE is above $M$ for all large $n\in\NN$ a.s.-$F_{\theta_\star}^\infty$, showing that (a) it is inconsistent at $\theta_\star$, and (b) it diverges to infinity a.s.-$F_{\theta_\star}^\infty$ (because $M$ is arbitrarily large).\footnote{In step 2, any statement about the MLE $\hat\theta_n$ tacitly assumes its existence. Nevertheless, should it not exist for a certain finite sequence $y_{1:n}\in\RR^n$ (i.e., should the supremum of the product likelihood not be achieved by any $\theta\geq 0$), any event $B\in \mathscr B(\RR^\NN)$ of the form $B=\{x_{1:\infty}\in\RR^\NN : \hat \theta_n\in A\}$ (for some $A\in\mathscr B(\RR)$) should be understood to exclude all those infinite sequences $x_{1:\infty}\in\RR^\NN$ for which $x_i = y_i$ for all $i=1,..., n$.} \label{step2}
\end{enumerate}
\bigskip

\subsubsection*{Proof of step 1}

Let \(\delta > 0\) be given, let \(x_1, \dots, x_n \in [0,1]\) be distinct numbers such that $x_i/\pi$ is irrational for all $i=1,\dots,n$,\footnote{Notice that the set of all such configurations $(x_1,\dots,x_n)$ has $n$-dimensional Lebesgue measure 1, so restricting to such class of numbers is without loss of generality for our later probability statements.} and assume that $\hat\theta_n$ exists. We wish to show that there exists \(\theta > 0\) such that
\begin{equation}\label{eq:step1}
\frac{1+\cos(\theta x_i)}{1+\frac{\sin\theta}{\theta}} \ge 2-\delta \quad \text{for all } i = 1,\dots, n.
\end{equation}

As for the denominator of Equation~\eqref{eq:step1}, note that
\[
\lim_{\theta \to \infty} \frac{\sin\theta}{\theta} = 0.
\]
Thus, there exists \(\theta_0 \geq 0\) such that for all \(\theta \ge \theta_0\) we have
\[
\left|\frac{\sin\theta}{\theta}\right| < \frac{\delta}{4}.
\]
In particular, for \(\theta \ge \theta_0\),
\[
1+\frac{\sin\theta}{\theta} \le 1+\frac{\delta}{4}.
\]

As for the numerator of Equation~\eqref{eq:step1}, we now want to find $\theta\geq \theta_0$ such that
\[
1+\cos(\theta x_i) \ge 2-\frac{\delta}{2} \quad \text{for all } i=1,\dots,n,
\]
so it is enough to require
\[
\cos(\theta x_i) \ge 1-\frac{\delta}{2} \quad \text{for all } i=1,\dots,n.
\]
By the continuity of the cosine function at \(0\), there exists \(\varepsilon > 0\) (depending on \(\delta\)) such that
\[
|\phi| < \varepsilon \quad \Longrightarrow \quad \cos \phi \ge 1-\frac{\delta}{2}.
\]
Thus, if we can find $\theta\geq\theta_0$ and integers $k_1,\dots,k_n$ such that
\[
|\theta x_i - 2\pi k_i| < \varepsilon \quad \text{for all } i=1,\dots,n,
\]
then we have
\[
\cos(\theta x_i) = \cos\Bigl(\theta x_i - 2\pi k_i\Bigr) \ge 1-\frac{\delta}{2},
\]
and consequently,
\[
1+\cos(\theta x_i) \ge 2-\frac{\delta}{2}.
\]
To achieve this, we use Dirichlet's simultaneous approximation theorem:\footnote{See Corollary 1B on page 27 of \cite{schmidt2009diophantine}.} for any irrational \(x_1/2\pi, \dots, x_n/2\pi\) and for any \(M > 0\), there exist infinitely many natural numbers \(\theta \ge M\) and integers \(k_1, \dots, k_n\) such that
\[
\left|\frac{x_i}{2\pi} - \frac{k_i}{\theta}\right| < \frac{1}{\theta^{1+1/n}} \quad \text{for all } i = 1,\dots, n.
\]
Multiplying both sides of the inequality by \(2\pi \theta\), we obtain
\[
|\theta x_i - 2\pi k_i| < \frac{2\pi}{\theta^{1/n}} \quad \text{for all } i=1,\dots,n.
\]
Our goal is to have
\[
|\theta x_i - 2\pi k_i| < \varepsilon.
\]
To ensure this, it suffices to have
\[
\frac{2\pi}{\theta^{1/n}} \le \varepsilon \iff \theta \ge \left(\frac{2\pi}{\varepsilon}\right)^n.
\]
Hence, Dirichlet's simultaneous approximation theorem guarantees the existence of a natural number \(\theta\) and integers \(k_i\) such that
\[
\theta \ge \max\left\{\theta_0, \left(\frac{2\pi}{\varepsilon}\right)^n\right\}
\]
and
\begin{equation*}
     |\theta x_i - 2\pi k_i| < \varepsilon \quad \text{for all } i=1,\dots,n.
\end{equation*}

In conclusion, for the integer \(\theta\) obtained above we have \(\theta \ge \theta_0\) and so
\[
1+\frac{\sin\theta}{\theta} \le 1+\frac{\delta}{4}.
\]
Thus, for every \(i=1,\dots,n\) we obtain
\[
\frac{1+\cos(\theta x_i)}{1+\frac{\sin\theta}{\theta}} \ge \frac{2-\frac{\delta}{2}}{1+\frac{\delta}{4}}\ge 2-\delta.
\]

\begin{remark}
    As a byproduct of the above proof, we find that, for any $\delta>0$ and distinct points $(x_1, \dots, x_n)\in[0,1]^n$ in a set of $n$-dimensional full Lebesgue measure, there exist infinitely many $\theta >M$ (for any arbitrarily large $M>0$) at which the product likelihood takes a value greater than $(2-\delta)^n$. Because
    \begin{equation*}
        \max_{x\in[0,1]} f_\theta(x) = \frac{2\theta}{\theta + \sin\theta} \to 2 \quad \textnormal{as } \theta\to\infty,
    \end{equation*}
    this effectively means that, above any $M>0$, there is an infinite number of peaks of the likelihood whose height is arbitrarily close to the asymptotic maximum $2^n$.
\end{remark}

\subsubsection*{Proof of step 2}

In the second step of the proof, we show that the product likelihood, restricted to $\theta \in [0, M]$ for some $M < \infty$, cannot asymptotically attain the $2^n$ lower-bound derived in the previous step. To this end, we first establish that, for any fixed $\theta \geq 0$, the product of the likelihood values plus a small constant $\varepsilon > 0$ cannot asymptotically reach this lower-bound. The presence of this positive $\varepsilon$ then allows us, together with the equicontinuity of the likelihood function, to extend the argument uniformly over $[0, M]$ via a standard covering argument. We therefore begin with the following lemma.

\begin{lemma}\label{lem:upperbound_prod_likelihood}
    Let $X_1, \dots, X_n\iid f_{\theta_\star}$ for some $\theta_\star\geq 0$. There exists a universal constant $c< 2$ such that
    \begin{equation*}
        \EE\left[\prod_{i=1}^n (f_\theta(X_i)+ \varepsilon)\right] \leq (c+\varepsilon)^n
    \end{equation*}
    for any $\theta, \varepsilon\geq 0$.
\end{lemma}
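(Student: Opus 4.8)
The plan is to exploit the i.i.d.\ structure to collapse the claim to a single deterministic integral, and then to show that integral is bounded away from $2$ uniformly in the parameters. First I would observe that, since $X_1,\dots,X_n$ are independent and each factor $f_\theta(X_i)+\varepsilon$ depends on $X_i$ alone, the expectation factorizes, so
\[
\EE\left[\prod_{i=1}^n (f_\theta(X_i)+\varepsilon)\right] = \left(\EE[f_\theta(X_1)]+\varepsilon\right)^n .
\]
It therefore suffices to produce a universal constant $c<2$ with $\EE[f_\theta(X_1)] = \int_0^1 f_\theta(x)f_{\theta_\star}(x)\,\mathrm dx \le c$ for all $\theta,\theta_\star\ge 0$. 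By Cauchy--Schwarz this cross term is at most $\big(\int_0^1 f_\theta^2\big)^{1/2}\big(\int_0^1 f_{\theta_\star}^2\big)^{1/2}$, so the whole problem reduces to showing $c_0:=\sup_{\theta\ge 0}\int_0^1 f_\theta^2(x)\,\mathrm dx<2$; taking $c=c_0$ then closes the argument.

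Writing $D_\theta:=1+\sin\theta/\theta$ for the normalizing constant in \eqref{eq:sin_density} and expanding $(1+\cos\theta x)^2$ with standard trigonometric identities, I would compute the explicit formula
\[
\int_0^1 f_\theta^2(x)\,\mathrm dx = \frac{1}{D_\theta^2}\left(\frac{3}{2}+\frac{2\sin\theta}{\theta}+\frac{\sin 2\theta}{4\theta}\right)=:\frac{N(\theta)}{D_\theta^2}.
\]
The inequality $\int_0^1 f_\theta^2<2$ is equivalent to positivity of $Q(\theta):=2D_\theta^2-N(\theta)$, which simplifies (using $\sin 2\theta=2\sin\theta\cos\theta$) to
\[
Q(\theta)=\frac{1}{2}+2u^2+u\left(2-\frac{\cos\theta}{2}\right),\qquad u:=\frac{\sin\theta}{\theta}.
\]
Since $\int_0^1 f_\theta^2 = 2-Q(\theta)/D_\theta^2$ and $D_\theta^2\le 4$, a \emph{uniform} lower bound $Q(\theta)\ge q_0>0$ immediately gives $c_0\le 2-q_0/4<2$.

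The heart of the proof is thus the uniform positivity of $Q$. When $\sin\theta\ge 0$ we have $u\ge 0$ and $2-\cos\theta/2>0$, so trivially $Q(\theta)\ge 1/2$. When $\sin\theta<0$, bounding the coefficient by $2-\cos\theta/2\le 5/2$ (which makes the negative term $u(2-\cos\theta/2)$ most negative) gives $Q(\theta)\ge R(u):=2u^2+\tfrac{5}{2}u+\tfrac{1}{2}$, whose roots are $u=-1$ and $u=-1/4$; hence $R(u)>0$ precisely when $u>-1/4$. It then remains to establish the elementary estimate $\sin\theta/\theta>-1/4$ for every $\theta>0$, which I would prove by a short case split: $\sin\theta\ge 0$ for $\theta\le\pi$; $|\sin\theta/\theta|\le 1/(2\pi)<1/4$ for $\theta\ge 2\pi$; and the direct comparison $-\sin\theta<\theta/4$ on the remaining compact interval $[\pi,2\pi]$, where $-\sin\theta\le -\sin 4<\pi/4\le\theta/4$. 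Continuity and compactness promote this to a uniform gap $q_0>0$.

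I expect the uniform bound $c_0<2$ to be the main obstacle, because the supremum of $\int_0^1 f_\theta^2$ is genuinely close to $2$ (it is attained near $\theta\approx 4.2$, with value roughly $1.81$). Consequently no crude pointwise estimate suffices: for instance $\int_0^1 f_\theta^2\le \max_x f_\theta = 2/D_\theta$ exceeds $2$ precisely in the regime $\sin\theta<0$ that is the delicate one. The essential averaging effect of the oscillations must be captured, which is exactly what the explicit computation of $N(\theta)$ and the quadratic-in-$u$ analysis of $Q$ accomplish. The remaining ingredients---the factorization, Cauchy--Schwarz, and the trigonometric integral---are routine.
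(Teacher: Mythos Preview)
Your approach coincides with the paper's in spirit: after the i.i.d.\ factorization, both proofs end up bounding $\sup_{\theta\ge 0}\int_0^1 f_\theta^2<2$. The paper first computes the full cross integral $\int_0^1 f_\theta f_{\theta_\star}$ and then, without comment, passes to the diagonal $\theta=\theta_\star$ and simply asserts that $c=1.9$ works; your explicit Cauchy--Schwarz step justifies that passage, and your quadratic analysis of $Q(\theta)=\tfrac12+2u^2+u(2-\tfrac12\cos\theta)$ supplies the analytical verification the paper omits. In that sense your write-up is more complete than the paper's.

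There is, however, a slip in your final case analysis. On $[\pi,2\pi]$ the chain $-\sin\theta\le-\sin 4<\pi/4\le\theta/4$ is false: at $\theta=3\pi/2$ one has $-\sin\theta=1>-\sin 4\approx 0.757$. The desired inequality $-\sin\theta<\theta/4$ does hold on $[\pi,2\pi]$, but you need to split the interval. On $[\pi,4]$, $-\sin$ is increasing (since $\cos\theta<0$ there), so $-\sin\theta\le-\sin 4<\pi/4\le\theta/4$, which is exactly your chain and is valid on this sub-interval. On $[4,2\pi]$, use instead $-\sin\theta\le 1\le\theta/4$, with strict inequality throughout since $-\sin 4<1$ at the left endpoint and $\theta/4>1$ for $\theta>4$. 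With this patch your argument goes through and yields a uniform lower bound $q_0>0$ for $Q$, hence $c_0\le 2-q_0/4<2$ as you intended.
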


\begin{proof}
    By the iid assumption, we get
    \begin{equation*}
        \EE\left[\prod_{i=1}^n (f_\theta(X_i)+ \varepsilon)\right] = \left(\int_0^1 f_\theta(x)f_{\theta_\star}(x) \,\mathrm dx + \varepsilon\right)^n.
    \end{equation*}
    Also
    \begin{align*}
        \int_0^1 f_\theta(x)& f_{\theta_\star}(x) \, \mathrm dx = \frac{\int_0^1 (1+\cos(\theta x))(1+\cos(\theta_\star x))\mathrm dx}{\left(1 +\frac{\sin\theta}{\theta}\right)\left( 1 +\frac{\sin\theta_\star}{\theta_\star}\right)} \\
        & = \frac{1 +\frac{\sin\theta}{\theta} + \frac{\sin\theta_\star}{\theta_\star} + \frac{1}{2}\left[\int_0^1 \cos((\theta-\theta_\star) x)\mathrm dx + \int_0^1 \cos((\theta+\theta_\star) x)\mathrm dx\right]}{\left(1 +\frac{\sin\theta}{\theta}\right)\left( 1 +\frac{\sin\theta_\star}{\theta_\star}\right)} \\
        & = \frac{1 +\frac{\sin\theta}{\theta} + \frac{\sin\theta_\star}{\theta_\star} + \frac{1}{2}\left[\frac{\sin(\theta-\theta_\star)}{\theta-\theta_\star} + \frac{\sin(\theta+\theta_\star)}{\theta+\theta_\star}\right]}{\left(1 +\frac{\sin\theta}{\theta}\right)\left( 1 +\frac{\sin\theta_\star}{\theta_\star}\right)}
    \end{align*}
    for all $\theta, \theta_\star\geq 0$. Therefore, letting $g(\theta):=\sin(\theta)/\theta$, we look for a constant $c<2$ satisfying the inequality
    \begin{equation}
        c \geq \frac{1 + 2g(\theta) + \frac{1}{2}(1+g(2\theta))}{(1+g(\theta))^2} \equiv \frac{1 + 2g(\theta) + \frac{1}{2}(1+g(\theta)\cos(\theta))}{(1+g(\theta))^2}.
    \end{equation}
    for all $\theta\geq 0$, finding that $c=1.9$ works to this end.
\end{proof}

Now fix $\varepsilon\in(0, 2-c)$ and $b\in (c+\varepsilon, 2)$, where $c<2$ is the constant obtained in Lemma~\ref{lem:upperbound_prod_likelihood}. Then, using Markov's inequality, we obtain that
\begin{equation}\label{eq:bound_single_theta}
    F_{\theta_\star}^\infty \left(x_{1:\infty}\in [0,1]^\NN\, :\,  \prod_{i=1}^n (f_\theta(x_i)+\varepsilon) \geq b^n\right) \leq \left(\frac{c+\varepsilon}{b}\right)^n \equiv e^{-dn},
\end{equation}
for all $\theta, \theta_\star\geq 0$ and $d\equiv\ln(b/(c+\varepsilon))>0$. Notice that $d$ itself, after fixing $\varepsilon$ and $b$, can be thought of as a universal constant independent of $\theta$ and $\theta_\star$. Now fix $M>0$, let $\{\theta^1, \theta^2, \dots\}$ be an enumeration of $\QQ\cap[0,M]$, and define $\mathcal T_n = \{\theta^1, \dots, \theta^n\}$. Therefore a union bound gives
\begin{equation*}
    F_{\theta_\star}^\infty \left(x_{1:\infty}\in [0,1]^\NN\, :\,  \max_{\theta\in\mathcal T_n}\prod_{i=1}^n (f_\theta(x_i)+\varepsilon) \geq b^n\right) \leq n e^{-dn}.
\end{equation*}
Because this upper-bound is summable in $n\in\NN$, the first Borel-Cantelli lemma yields $F_{\theta_\star}^\infty (\Omega_\star) = 1$, where
\begin{equation*}
    \Omega_\star := \left\{x_{1:\infty}\in[0,1]^\NN\, :\,  \max_{\theta\in\mathcal T_n}\prod_{i=1}^n (f_\theta(x_i)+\varepsilon) < b^n \quad \textnormal{ultimately}\right\}.
\end{equation*}

Now notice that, by density of $\QQ\cap[0,M]$ in $[0,M]$, for all $\delta>0$ there exists $N_\delta\in\NN$ such that, for all $n\geq N_\delta$, the maximum distance between consecutive points $\theta,\theta'\in\mathcal T_n$ is less than $\delta$. In particular, fixing $n\geq N_\delta$, for all $\theta\in[0,M]$, there exists $\theta'\in\mathcal T_n$ such that $|\theta - \theta'|< \delta$. Moreover, by verifying that $|\partial f_\theta(x)/\partial \theta|\leq L$ for all $\theta\geq 0$, $x\in[0,1]$ and some $L<\infty$, the family $\{\theta\mapsto f_\theta(x) : x\in[0,1]\}$ is easily seen to be equicontinuous. In particular, there exists $\delta>0$ such that $|\theta - \theta'|<\delta$ implies $|f_\theta(x) - f_{\theta'}(x)|<\varepsilon$ for all $x\in[0,1]$ and $\theta,\theta'\in[0,M]$ (recall that $\varepsilon>0$ has been fixed beforehand).

Now fix $x_{1:\infty}\in\Omega_\star$. Therefore, there exists $N\in\NN$ such that
\begin{equation*}
    \max_{\theta\in\mathcal T_n}\prod_{i=1}^n (f_\theta(x_i)+\varepsilon) < b^n
\end{equation*}
for all $n\geq N$. Choosing $n\geq N\lor N_\delta$ and denoting $\theta_n \in\arg\max_{\theta\in[0,M]} \prod_{i=1}^n f_\theta(x_i)$,\footnote{Notice that, by an easy application of Weierstrass's theorem, $\theta_n$ is well defined.} we have that $|\theta_n-\theta|<\delta$ for some $\theta\in\mathcal T_n$, and so
\begin{equation*}
    \prod_{i=1}^n f_{\theta_n}(x_i) \leq \prod_{i=1}^n (f_{\theta}(x_i)+\varepsilon) < b^n.
\end{equation*}
Because this holds for any $x_{1:\infty}\in\Omega_\star$, we have shown that
\begin{equation*}
    F_{\theta_\star}^\infty \left(x_{1:\infty}\in [0,1]^\NN \, :\,  \max_{\theta\in[0,M]}\prod_{i=1}^n f_\theta(x_i) < b^n \quad \textnormal{ultimately}\right) = 1.
\end{equation*}
In particular, because $b<2$, by step 1 we have
\begin{equation*}
    \left\{x_{1:\infty}\in [0,1]^\NN \, :\,  \max_{\theta\in[0,M]}\prod_{i=1}^n f_\theta(x_i) < b^n \quad \textnormal{ultimately}\right\} \subseteq \left\{x_{1:\infty}\in [0,1]^\NN \, :\hat\theta_n >M \quad \textnormal{ultimately}\right\},
\end{equation*}
showing that the MLE $\hat \theta_n$ is larger than any $M>0$ for all large $n\in\NN$, a.s.-$F_{\theta_\star}^\infty$.

\subsection*{Proof of Proposition~\ref{pro:G-VdV_consistency}}%

Lemma~\ref{lem:hellinger<euclidean} implies that the Hellinger metric entropy of the set $\Theta_n := \{f_\theta : \theta \in [0, \bar\theta_n]\}$ can be bounded as follows. Because the Hellinger metric is upper-bounded by the Euclidean metric, one can cover $\Theta_n$ with $N\leq \bar\theta_n/\delta$ Euclidean, hence Hellinger, balls of radius $\delta$. Therefore the Hellinger $\delta$-covering number of $\Theta_n$ satisfies $N(\Theta_n, d_h, \delta) \leq \bar\theta_n/\delta$. In particular, this implies that the Hellinger metric entropy (the natural log of the covering number) satisfies
\begin{equation*}
    \ln N(\Theta_n, d_h, \delta) \leq  \ln\left(\frac{\bar\theta_n}{\delta}\right).
\end{equation*}
Hence, we can use the conditions of Theorem 6.23 in \cite{ghosal2017fundamentals} to ensure that the posterior mass of any Hellinger neighborhood of $f_{\theta_\star}$, for $\theta_\star =0$, converges to 1 a.s.-$F_{\theta_\star}^{\infty}$ as $n\to\infty$: for any $\delta>0$, we require that there exists $c>0$ such that, for all large $n\in\NN$, the prior $\Pi$ satisfies $\Pi(\Theta_n^c)\leq e^{-c n}$ as long as $\ln N(\Theta_n, d_h, \delta)\leq n\delta^2$, or equivalently $\bar\theta_n\leq \delta \exp\{n\delta^2\}$.

Therefore, any prior $\Pi$ on $[0,\infty)$ that, for all $\delta>0$, admits $c>0$ such that
\begin{equation*}
    \Pi\left(\left[\delta \exp\{n\delta^2\}, \, \infty\right)\right) \leq e^{-cn} \quad \textnormal{for all large } n\in\NN,
\end{equation*}
will ensure Hellinger consistency at the uniform distribution. We now show that, for any $\varphi :(0, \infty) \to (0,\infty)$ such that $\lim_{t\to\infty} \varphi(t)/t = \infty$, a prior $\Pi$ with
\begin{equation*}
    \Pi([\theta, \infty)) \leq e^{-\varphi(\ln \theta)}, \quad 
    \textnormal{for all large } \theta>0
\end{equation*}
satisfies the previous condition. Indeed, for all $\delta>0$, the properties of $\varphi$ imply that $\varphi(\ln\delta + n\delta^2) \geq \ln\delta + n\delta^2 \geq c n$ for all $n\in\NN$ large and some small $c>0$. Therefore,
\begin{equation*}
    \Pi\left(\left[\delta \exp\{n\delta^2\}, \, \infty\right)\right) \leq \exp\left\{-\varphi(\ln \delta + n\delta^2)\right\} \leq e^{-cn} \quad \textnormal{for all large } n\in\NN,
\end{equation*}
as desired.

\subsection*{Proof of Proposition~\ref{pro:consistency_walker2004}}

For any $\delta>0$, because the sequence $\theta_k:=(1+2k)\delta$, $k\in\NN_0$, is a $\delta$-cover of $[0,\infty)$ (that is, for all $\theta\geq 0$, there exists $k\in\NN_0$ such that $|\theta - \theta_k|\leq \delta$), the sequence $f_{\theta_k}$ is a $\delta$-cover, in the Hellinger sense, of $\{f_\theta : \theta\in[0,\infty)\}$ (thanks to Lemma~\ref{lem:hellinger<euclidean}). Therefore, defining
\begin{equation*}
    A_k = \{f_\theta : |\theta - \theta_k|\leq \delta\} \equiv \left\{f_\theta : \theta \in \left[2k\delta, \, 2k\delta + 2\delta\right]\right\}
\end{equation*}
for all $k\in\NN_0$, this shows that $A_k \subseteq A_k^\star:=\{f_\theta : d_h(f_\theta, f_{\theta_k}) \leq \delta\}$. Thus, we can use Theorem 4 in \cite{walker2004squarerootsum} to design a prior $\Pi$ such that
\begin{equation*}
    \sum_{k\in\NN_0}\sqrt{\Pi(A_k)} \equiv \sum_{k\in\NN_0}\sqrt{\Pi\left(\left[2k\delta, \, 2k\delta + 2\delta\right]\right)} < \infty
\end{equation*}
and conclude that the posterior is Hellinger consistent at $\theta_\star = 0$. In particular, we have assumed that $\Pi$ admits a density $\pi(\theta)$ on $[0,\infty)$ that, for all large $\theta>0$, is decreasing and satisfies
\begin{equation*}
    \pi(\theta) \lesssim  \frac{1}{\theta^2 (\ln \theta)^{2 + \beta}},
\end{equation*}
for some $\beta>0$. Then, for $k_0\in\NN_0$ large enough,
\begin{align*}
    \sum_{k=k_0}^\infty\sqrt{\Pi\left(\left[2k\delta, \, 2k\delta + 2\delta\right]\right)} & \lesssim \sum_{k=k_0}^\infty\sqrt{\frac{2\delta}{(2k\delta)^2 (\ln (2k\delta))^{2 + \beta}}} \\
    & = \frac{1}{\sqrt{2\delta}} \sum_{k=k_0}^\infty \frac{1}{k\ln(2k\delta)^{1+\beta/2}} < \infty
\end{align*}
for all $\delta>0$, showing that $\Pi$ yields strong consistency at $\theta_\star = 0$.

\subsection*{Proof of Theorem~\ref{thm:our_consistency}}%

For this proof, denote by $\hat\theta_n$ any MLE restricted to the sieve $[0,M_n] \cap A_\varepsilon^c$, where $M_n$ is a positive sequence determined throughout the proof, and $A_\varepsilon := \{\theta\geq 0 : d_h(f_\theta, f_0)<\varepsilon\}$ for some small $\varepsilon>0$. Then, for $\delta>0$ chosen small enough, for all $\theta\in A_\varepsilon^c$ we have
\begin{align*}
    F_{0}^\infty\left( \prod_{i=1}^n\left(f_\theta(X_i)^{1/2} +\delta\right) \geq e^{-nb/2}\right) & \leq e^{nb/2} \left(\int_0^1\sqrt{f_\theta(x)f_{\theta_\star}(x)}\,\mathrm dx + \delta\right)^n \\
    & =e^{nb/2} \left(1- d_h^2(f_\theta, f_{\theta_\star})/2 + \delta\right)^n \\
    & \leq e^{nb/2} (1-\varepsilon^2/2 + \delta)^n \\
    & \leq e^{nb/2} (1-\varepsilon^2/4)^n \\
    & \leq e^{nb/2} e^{-n\varepsilon^2/4},
\end{align*}
which, choosing $b>0$ small enough, is smaller than $e^{-nC_\varepsilon}$ for some $C_\varepsilon>0$. Let $M_n = e^{nc}$ for some $c<C_\varepsilon$ and let $\eta>0$ be such that $|\theta-\theta'|<\eta \implies \max_{x\in[0,1]}|\sqrt{f_\theta(x)} - \sqrt{f_{\theta'}(x)}| < \delta$.\footnote{Recall that $\{\theta \mapsto f_\theta(x): x\in[0,1]\}$ is uniformly Lipschitz, therefore equicontinuous.} Therefore, constructing an $\eta$-cover $\{\theta^1,\theta^2,\dots\}$ of $[0,M_n]\cap A_\varepsilon^c$ of cardinality at most $M_n/\eta$, a union bound gives
\begin{align*}
    F_{0}^\infty\left( \max_{\theta \in  [0,M_n] \cap A_\varepsilon^c} \prod_{i=1}^nf_\theta^{1/2}(X_i) \geq e^{-nb/2}\right) & \leq \frac{M_n}{\eta} F_{0}^\infty\left( \prod_{i=1}^n\left(f_{\theta^1}^{1/2}(X_i)+\delta\right) \geq e^{-nb/2}\right)\\
    & \leq \frac{1}{\eta}e^{-(C_\varepsilon - c)n}.
\end{align*}
The above upper-bound is summable in $n\in\NN$ and therefore we obtain that
\begin{equation}\label{eq:sieve_MLE_consistent}
    \prod_{i=1}^n\left(\frac{f_{\hat\theta_n}(X_i)}{f_{\theta_\star}(X_i)}\right)^{1/2} \equiv \prod_{i=1}^n f_{\hat\theta_n}^{1/2}(X_i) < e^{-nb/2}
\end{equation}
ultimately a.s.-$F_{0}^\infty$.

Now write
\begin{equation}\label{eq:complement_decomposition}
    \Pi(A_\varepsilon^c\mid X_{1:n}) = \Pi(A_\varepsilon^c \cap [0,e^{cn}]\mid X_{1:n}) + \Pi(A_\varepsilon^c \cap (e^{cn},\infty)\mid X_{1:n}).
\end{equation}
Using a line of reasoning similar to \cite{walker2001}, rewrite the first addendum as follows:
\begin{equation}\label{eq:first_addendum}
    \Pi(A_\varepsilon^c \cap [0,e^{cn}] \mid X_{1:n}) = \frac{\int_{A_\varepsilon^c \cap [0,e^{cn}]}\prod_{i=1}^n\frac{f_\theta(X_i)}{f_{\theta_\star}(X_i)} \,\Pi(\mathrm d\theta)}{\int_{\Theta}\prod_{i=1}^n\frac{f_\theta(X_i)}{f_{\theta_\star}(X_i)} \,\Pi(\mathrm d\theta)}.
\end{equation}
A standard result \citep[see, e.g.,][Lemma 4]{barron1999} ensures that, as long as $\theta_\star \in \kls(\Pi)$, the denominator satisfies 
\begin{equation}\label{eq:our_denominator}
    \int_{\Theta}\prod_{i=1}^n\frac{f_\theta(X_i)}{f_{\theta_\star}(X_i)} \,\Pi(\mathrm d\theta) > e^{-dn} 
\end{equation}
ultimately a.s.-$F_{\theta_\star}^\infty$ for all $d>0$. For the numerator, we can write
\begin{align}\label{eq:walker_hjort_numerator}
    \int_{A_\varepsilon^c \cap [0,e^{cn}]}& \prod_{i=1}^n\frac{f_\theta(X_i)}{f_{\theta_\star}(X_i)} \,\Pi(\mathrm d\theta) \nonumber\\
    & \leq \prod_{i=1}^n\left(\frac{f_{\hat\theta_n}(X_i)}{f_{\theta_\star}(X_i)}\right)^{1/2} \int_{A_\varepsilon^c \cap [0,e^{cn}]}\prod_{i=1}^n\left(\frac{f_\theta(X_i)}{f_{\theta_\star}(X_i)}\right)^{1/2} \Pi(\mathrm d\theta).
\end{align}
For the second factor in Equation~\eqref{eq:walker_hjort_numerator}, one obtains
\begin{align*}
    \mathbb E_{\theta_\star}\left[\int_{A_\varepsilon^c \cap [0,e^{cn}]}\prod_{i=1}^n\left(\frac{f_\theta(X_i)}{f_{\theta_\star}(X_i)}\right)^{1/2} \Pi(\mathrm d\theta)\right] &\leq \int_{A_\varepsilon^c}\prod_{i=1}^n\EE_{\theta_\star}\left[\left(\frac{f_\theta(X_i)}{f_{\theta_\star}(X_i)}\right)^{1/2}\right] \Pi(\mathrm d\theta) \\
    & \leq \Pi(A_\varepsilon^c) (1-\varepsilon^2/2)^n \\
    & \leq \Pi(A_\varepsilon^c) e^{-n\varepsilon^2/2},
\end{align*}
so that, by Markov's inequality and the first Borel–Cantelli lemma,
\begin{equation}\label{eq:walkerhjort_numerator}
    \int_{A_\varepsilon^c \cap [0,e^{cn}]}\prod_{i=1}^n\left(\frac{f_\theta(X_i)}{f_{\theta_\star}(X_i)}\right)^{1/2} \Pi(\mathrm d\theta) < e^{-n\varepsilon^2/4}
\end{equation}
ultimately a.s.-$F_{\theta_\star}^\infty$. Thus, putting together Equations~\eqref{eq:sieve_MLE_consistent}-\eqref{eq:walkerhjort_numerator} and choosing $d>0$ small enough, we obtain
\begin{equation*}
    \lim_{n\to\infty}\Pi(A_\varepsilon^c \cap [0,e^{cn}]\mid X_{1:n}) =0
\end{equation*}
a.s.-$F_{\theta_\star}^\infty$.

As for the second addendum in Equation~\eqref{eq:complement_decomposition}, write
\begin{equation}\label{eq:last_equation}
    \Pi(A_\varepsilon^c \cap (e^{cn},\infty)\mid X_{1:n}) \leq \Pi( (e^{cn},\infty)\mid X_{1:n}) = \frac{\int_{e^{cn}}^\infty \prod_{i=1}^n f_\theta(X_i) \pi(\theta) \mathrm d\theta}{\int_0^\infty\prod_{i=1}^n f_\theta(X_i) \pi(\theta) \mathrm d\theta},
\end{equation}
and notice that, because the true density $f_0$ is equal to 1 on $[0,1]$, we can interpret $\prod_{i=1}^n f_\theta(X_i)$ as the likelihood ratio. For the denominator, we once again invoke the KL support condition to satisfy Equation~\eqref{eq:our_denominator}. As for the numerator, for all large $n\in\NN$ we have
\begin{align*}
    \mathbb E\left[\int_{e^{cn}}^\infty \prod_{i=1}^n f_\theta(X_i) \pi(\theta) \mathrm d\theta \right] & = \int_{e^{cn}}^\infty \mathbb E\left[\prod_{i=1}^n f_\theta(X_i)\right] \pi(\theta) \mathrm d\theta\\
    & = \int_{e^{cn}}^\infty \pi(\theta) \mathrm d\theta \\
    & \lesssim \int_{e^{cn}}^\infty \theta^{-(1+\alpha)} \mathrm d\theta \\
    & =\frac{1}{\alpha}e^{-\alpha cn},
\end{align*}
where the first equality follows from an application of Fubini's theorem and the second one from the assumption that $X_1,\dots,X_n\iid \textnormal{Unif}(0,1)$. Therefore, Markov's inequality and the first Borel-Cantelli lemma imply that the numerator of the right-hand side of Equation~\eqref{eq:last_equation} is smaller than $e^{-\alpha cn/2}$ ultimately a.s.-$F_0^\infty$. Finally, choosing $d>0$ small enough, we conclude that
\begin{equation*}
    \lim_{n\to\infty}\Pi(A_\varepsilon^c \cap (e^{cn},\infty)\mid X_{1:n}) =0
\end{equation*}
a.s.-$F_{\theta_\star}^\infty$.

\clearpage
\bibliography{arxiv.bib}
\bibliographystyle{apalike}

\end{document}